\DeclareMathAlphabet{\mathpzc}{OT1}{pzc}{m}{it}
\newcommand{\one}{^{(1)}}
\newcommand{\two}{^{(2)}}
\newcommand{\three}{^{(3)}}
\newcommand{\e}{\epsilon}
\newtheorem{thm}{Theorem}[section]
\newtheorem{lem}{Lemma}[section]
\newtheorem{prop}{Proposition}[section]
\newtheorem{cor}{Corollary}[section]
\newtheorem{definition}{Definition}[section]
\begin{document}

\title{Stability Conditions for Coupled Autonomous Vehicles Formations} 
\author{Pablo E. Baldivieso \thanks{Oregon State University - Cascades;
e-mail: pablo.baldivieso@osucascades.edu},
J. J. P. Veerman\thanks{Fariborz Maseeh Dept. of Math. and Stat., Portland State Univ.;
e-mail: veerman@pdx.edu}
}\maketitle

\doublespace
\vskip .0in

\noindent

\begin{abstract}
In this paper, we give necessary conditions for stability of coupled autonomous vehicles in $\mathbb{R}$.
We focus on linear arrays with decentralized vehicles, where each vehicle interacts with only a few of its
neighbors. We obtain explicit expressions for necessary conditions for stability
in the cases that a system consists of a periodic arrangement of two or three different
types of vehicles, i.e. configurations as follows: ...2-1-2-1 or ...3-2-1-3-2-1.
Previous literature indicated that the (necessary) condition for stability in the case
of a single vehicle type (...1-1-1) held that the first moment of certain coefficients of
the interactions between vehicles has to be zero. Here, we show that that does not generalize.
Instead, the (necessary) condition in the cases considered is that the first moment \emph{plus
a nonlinear correction term} must be zero.
\end{abstract}

\section{Introduction}

Linear arrays of agents, or particles have been studied in many areas such as flock formations, see \cite{OKUBO19861, Young}
and vehicular platooning, see \cite{Bamieh, Defoort, Lin, swaroop1996string}. We direct our attention
to autonomous vehicular formation in $\mathbb{R}$, namely $n$ vehicles driving on a one-lane road. By autonomous vehicles, we mean that each vehicle does not have any human assistance other than its own set of initial values and a pre-specified set of interaction parameters between its neighbors.

The systems we study are set up as follows. The symbol $\bf z$ is used for the $n$ positions of vehicles
on the line. The equations of motion can be compactly written as
\begin{align}
\dfrac{d}{dt}\begin{pmatrix} \mathbf{z} \\ \dot{\mathbf{z}} \end{pmatrix} =
\begin{pmatrix} 0 & I \\ L_x & L_v \end{pmatrix}
\begin{pmatrix} \mathbf{z} \\ \dot{\mathbf{z}} \end{pmatrix}=
{\bf M} \begin{pmatrix} \mathbf{z} \\ \dot{\mathbf{z}} \end{pmatrix}\;,
\label{eq:laplacian-system}
\end{align}
where $I$ is the $n\times n$ identity, $L_x$ and $L_v$ are $n\times n$ so-called Laplacian
matrices. If all agents are identical, $L_x$ has the following form:
\begin{align}
L_x=g_x\begin{pmatrix} 1 & \rho_{x,1} & \rho_{x,2} & \cdots \\
\rho_{x,-1}& 1 & \rho_{x,1} & \cdots \\
\rho_{x,-2}& \rho_{x,-1} & 1 \cdots \\
\vdots & \vdots && \end{pmatrix},
\label{eq:xlaplacian}
\end{align}
and similar for $L_v$.
Equation (\ref{eq:laplacian-system}) is meant to express the idea that the acceleration of the $k$th
vehicle depends
on the \emph{positions relative to it} of some of his neighbors --- this is expressed through
the matrix $L_x$ --- and on the \emph{velocities relative to it} --- expressed through
$L_v$. Vehicles whose response depends only on positions and velocities \emph{relative to them}
are called \emph{decentralized}. The fact they are decentralized implies
that $L_x$ and $L_v$ have row-sum zero. Hence they share many characteristics with
the usual Laplacian operator (for details, see \cite{LAFF} and \cite{veerman2005flocks}).
Ultimately, what we want to know is the behavior of the flock when the following happens.
For $t\leq 0$ the formation is in equilibrium, that is: $z_i=0$ and $\dot z_i$ is constant.
At $t\geq 0$, the first vehicle changes its velocity, and the others ``try" to follow.

In this paper we continue a line of research started in \cite{cantos2016signal,CantosTrans} and continued in \cite{herman2016transients,herbrych2015dynamics}. This line is distinct from most other work in two
aspects. First, we allow the interactions to be determined by \emph{two non-commuting} Laplacians
as evidenced in equation (\ref{eq:laplacian-system}). This renders the system (generally) non-diagonalizable
and methods of analysis in the previous literature do not apply. Indeed, to be successful one needs
some sort of generalization of the well-known method using \emph{periodic boundary conditions} best known
from applications in physics \cite{ashcroft1976solid}. The idea is that periodic boundary conditions
turn the Laplacian matrices into circulant matrices which can be simultaneously diagonalized
\cite{kra2012circulant}. This renders the system on the circle, at least in principle, soluble by
analytical means. The generalization of the method of periodic boundary conditions to asymmetric
systems is somewhat delicate and
was conjectured and discussed in \cite{cantos2016signal,CantosTrans}, and numerically supported in that
and other work \cite{herman2016transients,herbrych2015dynamics}.

The second aspect in which our work differs from other work concerns the effect of asymmetry in the
Laplacians on the dynamics of large flocks.
This asymmetry leads to non-orthogonal eigenvectors, and this, even for stable systems, can
(and does in many cases) lead to exponentially growing (in the number of agents) transients (see
Definition \ref{def_flock-stability} below).
This effect \emph{cannot be deduced} from the eigenvalues or eigenvectors of $\bf M$ in equation
\ref{eq:laplacian-system}. In the context of traffic, the possibility of such behavior was first pointed
out in \cite{VST2009, VT2010}. Here we illustrate this effect in Figure \ref{fig:picN2}.
From the point of view of traffic, systems with this property are \emph{just as undesirable} as
systems that are unstable in the usual sense (Definition \ref{def_stability} below).
Thus we need to study what systems are stable in \emph{both} senses.

From the above references, one can conclude that the general theory for flocks in the line with two distinct
and asymmetric Laplacians in the line is now reasonably well-established as long as all agents are identical.
In this paper we take the next step and study this theory in the more realistic case where agents
are \emph{not} identical. The ultimate goal here is to give exact conditions for stability for such flocks.
This is still analytically too hard to solve. In order to get some insight in this problem,
we study the dynamics of flocks with periodic arrangements of distinct agents.
In Section \ref{chap:nearest} we study
periodic arrangements of 3 types of agents ($\cdots 3-2-1-3-2-1$) with nearest neighbor interactions and in
Section \ref{chap:nextnearest} the subject is periodic arrangements of 2 types of agents ($\cdots 2-1-2-1$)
with \emph{next} nearest neighbor interactions. For these types of flocks, we develop \emph{necessary}
conditions for stability.

We follow the strategy implied by the aforementioned conjectures. These say that if for large enough $n$ the
system with periodic boundary conditions is unstable, then the system on the line (with non-trivial
boundary conditions) is unstable in either the sense of Definition \ref{def_stability}) or in the
sense of Definition \ref{def_flock-stability}).
Our earlier work for identical agents resulted in the statement that if $\sum_{j\neq 0} \rho_{x,j}j\neq 0$
(see equation (\ref{eq:xlaplacian})), then instability arises. In looking to prove a generalization
of this statement, we, very unexpectedly,
found that for more complicated systems --- presented in this work --- that statement is generally
false. Corollaries \ref{cor:triatomic} and \ref{cor:diatomic} show that in the cases at hand, a
nonlinear correction needs to be taken into account. We note that these formulas show that, surprisingly, \emph{stability is a co-dimension one phenomenon}! Thus, without the help of these formulae, it would be
very difficult to find stable flocks with non-symmetric interactions.

What we just described is a \emph{sufficient} condition for instability or, equivalently, a \emph{necessary}
condition for stability. It is clear that it is not sufficient for stability.
For example, if we give the last agent an infinite mass (setting $g_x=g_v=0$ for this agent), it cannot
change its velocity. Clearly, if the leader changes its velocity, a system with that boundary condition
cannot evolve towards equilibrium. To find a conditions for stability that are necessary \emph{and}
sufficient seems, for now, out of reach. For a more detailed discussion on the influence of
boundary conditions on the global dynamics of a linear system, see \cite{veerman2018spectra}.

In earlier studies of the stability of flocks on the line, one usually made several of the following
assumptions: the number of agents is infinite \cite{swaroop1996string, cook2005conditions}, the interactions
are symmetric or are forward-looking only \cite{ploeg2014controller}, interactions are small
\cite{bamieh2012coherence}, or $L_x$ and $L_v$ are identical, see
\cite{cook2005conditions, Lin, swaroop1996string, renatkins}. Others \cite{bamieh2012coherence, Lin} have
proposed the idea of coherence vehicular formation by local and global feedback and the analysis of
\emph{consensus} dynamics which are systems of first order ordinary differential equations, see also
\cite{hegselmann2002opinion, ren2005survey}. In our paper none of those assumptions are necessary.

For future reference, we define two notions of stability. In consequence of the fact
that $L_v$ and $L_x$ are Laplacians, we see that for arbitrary constant $x_0$ and $v_0$
\eqref{eq:laplacian-system} has an \emph{in formation} solution $z_i=x_0+v_0t$. This is desirable for a flock.
It \emph{does} mean, however, that the matrix associated with this linear system must have
a Jordan block of dimension 2 associated to the eigenvalue 0. In this paper, we will call a
system (linearly) stable if all \emph{other} eigenvalues have strictly negative real part.

\begin{definition} \label{def_stability}
The system \eqref{eq:laplacian-system} is linearly stable,
shortened to stable, if it has one eigenvalue zero with
geometric multiplicity one and algebraic multiplicity two, and all
other eigenvalues have real part less than zero. The system is (linearly)
unstable if at least one eigenvalue has positive real part.
\end{definition}

If in a stable system that is at rest, the leader acquires an initial velocity $v=1$ at $t=0$,
the system will undergo a temporary oscillatory change which will decrease in time.
So eventually the dynamics of a stable system will converge to an \emph{in formation} solution.
The initial oscillatory movement is called a \emph{transient}.
The size of these transients can be measured by the largest change in distance to the leader of any car
at any time \cite{VT2010,TVS2012}. See \cite{CantosTrans} for a more detailed discussion of the definition of \emph{flock stability}.

\begin{definition}\label{def_flock-stability}
The system \eqref{eq:laplacian-system} is flock stable if it is linearly stable \emph{and}
if transients grow less than exponentially fast in the number $n$ of vehicles. It is called
 flock unstable if the growth is exponential as $n$ tends to infinity.
\end{definition} 

\color{black}

\section{Periodic Arrangements with Nearest Neighbor Interactions.}
\label{chap:nearest}

\noindent
Linear flocks in $\mathbb{R}$ of \emph{identical} agents \color{black} have been thoroughly studied
(\cite{CantosTrans, herman2016transients, herbrych2015dynamics}). The necessary condition for stability is
that the first moment (or $\sum_{i =1}^3 \rho_{x,i}i$) of the coefficients of the spatial Laplacian must
be zero. For flocks of type ...2-1-2-1, the same is true. Details of the latter can be found in
\cite{baldivieso2019}. Here we will look at the arrangement ...3-2-1-3-2-1. Thus we consider of linear
arrays with $N=3n$ ($n$ of each type) vehicles in which each vehicle interacts with its nearest neighbors.
The quantities $z^{(i)}_j$ are the deviations from their equilibrium positions. The quantities $\dot{z}^{(i)}_j$, $i = 1, 2, 3$,
and $j= 1 \dots n$ are their derivatives with respect to time.

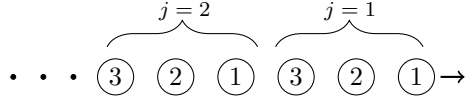
\begin{figure}
\centering
\begin{tikzpicture}[scale = .8]
\draw (0,0) circle [radius = .3];
\draw (1,0) circle [radius = .3];
\draw (2,0) circle [radius = .3];

\draw (-1,0) circle [radius = .3];
\draw (-2,0) circle [radius = .3];
\draw (-3,0) circle [radius = .3];

\draw[fill = black] (-3.7,0) circle [radius = .04];
\draw[fill = black] (-4.2,0) circle [radius = .04];
\draw[fill = black] (-4.7,0) circle [radius = .04];

\draw [thick,->] (2.4,0) -- (2.8,0);

\draw (2,0) node {$1$};
\draw (1,0) node {$2$};
\draw (0,0) node {$3$};
\draw (-1,0) node {$1$};
\draw (-2,0) node {$2$};
\draw (-3,0) node {$3$};

\draw [decorate,decoration={brace,amplitude=10pt},xshift=-4pt,yshift=0pt] (-0.2,.5) -- (2.25,.5);
\draw  (.9,1.1) node {\footnotesize $j = 1$};

\draw [decorate,decoration={brace,amplitude=10pt},xshift=-4pt,yshift=0pt] (-3,.5) -- (-.5,.5);
\draw  (-1.85,1.1) node {\footnotesize $j = 2$};
\end{tikzpicture}
\caption{\emph{Periodic arrangement of flocks with three types of vehicles, labeled by 1,2, and 3.
At time $t=0$, the first vehicle start moving to the right.}
\label{fig-arrangement}}
\end{figure}

The equations of motions for each type of particle are (see Figure \ref{fig-arrangement}):
\begin{align} \label{three_cars_model}
\begin{split}
\ddot{z}\one_j & = g\one_x  \left(z\one_j + \rho\one_{x,1} z\two_j+ \rho\one_{x,-1}z\three_{j-1} \right) + g\one_v \left(\dot z\one_j+ \rho\one_{v,1} \dot z\two_j + \rho\one_{v,-1} \dot z\three_{j-1}\right) \\
\ddot{z}\two_j & = g\two_x \left(z\two_j + \rho\two_{x,1} z\three_{j} + \rho\two_{x,-1} z\one_j \right) + g\two_v \left(\dot z\two_j + \rho\two_{v,1}\dot z\three_{j} + \rho\two_{v,-1}\dot z\one_j \right) \\
\ddot{z}\three_j & = g\three_x \left(z\three_j + \rho\three_{x,1} z\one_{j+1} + \rho\three_{x,-1} z\two_j \right) + g\three_v \left(\dot z\three_j + \rho\three_{v,1}\dot z\one_{j+1} + \rho\three_{v,-1}\dot z\two_j \right)
\end{split} \quad .
\end{align}
\noindent
We assume the flocks to be \emph{decentralized}, that is: the acceleration of an individual depends
only on observation \emph{relative} to that individual. For example, the first of the equations
in equation \eqref{three_cars_model}, should be thought of as:
\begin{align*}
\ddot{z}\one_j = g\one_x  \left[\rho\one_{x,1} \left(z\two_j-z\one_j\right)+
\rho\one_{x,-1}\left(z\three_{j-1}-z\one_j\right)\right] +
g\one_x  \left[\rho\one_{x,1} \left(\dot z\two_j-\dot z\one_j\right)+
\rho\one_{x,-1}\left(\dot z\three_{j-1}-\dot z\one_j\right)\right] \;.
\end{align*}
This leads to the following constraints: for $i\in\{1,2,3\}$
\begin{align} \label{constraints3}
\rho^{(i)}_{x,1} + \rho^{(i)}_{x,-1} = -1, ~~~~~~ \rho^{(i)}_{v,1} + \rho^{(i)}_{v,-1} = -1\quad .
\end{align}
\noindent
We will assume that $g\one_x, g\two_x, g\three_x, g\one_v, g\two_v$, and $g\three_v$ are real numbers.

According to the strategy described in the introduction, instability in the system with periodic
boundary condition will imply some form of instability (Definition \ref{def_stability} or Definition
\ref{def_flock-stability}) in the system on the real line if $N$ is large. Thus our task reduces
to deriving a criterion for instability for the system, given periodic boundary
conditions. The system subject to periodic boundary conditions is described as follows.
\begin{align}
\dfrac{d}{dt}\begin{pmatrix}
\mathbf{z}\one \\
 \mathbf{z}\two \\
 \mathbf{z}\three\\
 \dot{\mathbf{z}}\one \\
 \dot{\mathbf{z}}\two\\
  \dot{\mathbf{z}}\three
 \end{pmatrix} =
 \left( \begin{array}{c|c|c|c|c|c}\mathbf{0} & \mathbf{0} & \mathbf{0} & \mathbf{I} & \mathbf{0} & \mathbf{0} \\ \hline
\mathbf{0} & \mathbf{0} & \mathbf{0} & \mathbf{0} & \mathbf{I} & \mathbf{0} \\ \hline
\mathbf{0} & \mathbf{0} & \mathbf{0} & \mathbf{0} & \mathbf{0} & \mathbf{I} \\ \hline
g_x\one\mathbf{I} & g_x\one \rho\one _{x,1} \mathbf{I}& g_x\one \rho\one_{x,-1}\mathbf{P}_- & g_v\one\mathbf{I} & g_v\one \rho\one _{v,1}\mathbf{I} & g_v\one \rho\one_{v,-1}\mathbf{P}_- \\ \hline
g_x\two\rho\two _{x,-1} \mathbf{I} & g_x\two\mathbf{I} & g_x\two \rho\two _{x,1}\mathbf{I} & g_v\two \rho\two _{v,-1}\mathbf{I} & g_v\two\mathbf{I} & g_v\two \rho\two _{v,1}\mathbf{I}\\ \hline
g_x\three\rho\three_{x,1}\mathbf{P}_+ & g_x\three \rho\three _{x,-1}\mathbf{I} & g_x\three\mathbf{I}  & g_v\three\rho\three_{v,1}\mathbf{P}_+ & g\three _v \rho\three _{v,-1}\mathbf{I} & g_v\three\mathbf{I}
\end{array}\right)
\begin{pmatrix}
\mathbf{z}\one \\
\mathbf{z}\two \\
\mathbf{z}\three \\
\dot{\mathbf{z}}\one \\
\dot{\mathbf{z}}\two\\
\dot{\mathbf{z}}\three
\end{pmatrix} \;,
\label{eq:linear_sys3}
\end{align}
\noindent
where $\mathbf{P}_+$ and its inverse $\mathbf{P}_-$ are \color{blue}$n\times n$ \color{black} permutations matrices
\begin{align}
\mathbf{P}_+ =
\begin{pmatrix}
0 		&	1		&	0	&	\cdots		&	0\\
0		&	0		&	1		&	\ddots		&	\vdots \\
\vdots	&	\ddots	&	\ddots		&	\ddots		&	0 \\
0		&	\ddots	&		\ddots		&		0		&  1\\
1		&	0		&		\cdots		&	0		&	0	
\end{pmatrix}~,~~~
\mathbf{P}_- =
\begin{pmatrix}
0 		&	0		&	\cdots	&	0		&	1\\
1		&	0		&	0		&	\ddots		&	0\\
0		&	1		&	\ddots		&	\ddots		&	\vdots \\
\vdots	&	\ddots	&		\ddots		&		0		&  0\\
0		&	\cdots	&		0		&	1			&	0	
\end{pmatrix}\;.
\label{permut_matrices}
\end{align}
We will abbreviate equation \eqref{eq:linear_sys3} simply as
\begin{align}
\dfrac{d}{dt}\begin{pmatrix} \mathbf{z} \\ \dot{\mathbf{z}} \end{pmatrix} = \mathbf{M}
\begin{pmatrix} \mathbf{z} \\ \dot{\mathbf{z}} \end{pmatrix}\;.
\label{eq:simple-eqn}
\end{align}

\begin{definition} From now on, we set $\phi_m=\frac{2\pi m}{n}$, $m\in\{0,\cdots n-1\}$.
When there is no ambiguity, we will often drop the subscript from $\phi_m$. We let ${\bf v}_m$ be
the $n$-vector whose $j$th component equals $e^{ij\phi_m}$.
\label{def:evecs}
\end{definition}

\begin{prop} The eigenvalues $\nu$ and associated eigenvectors ${\bf u}_\nu(\phi_m)$ of $\mathbf{M}$ satisfy
\begin{align*}
{\bf u}_\nu(\phi_m)=\begin{pmatrix}\e_1\mathbf{v}_m, \e_2\mathbf{v}_m, \e_3\mathbf{v}_m, \nu \e_1\mathbf{v}_m, \nu \e_2\mathbf{v}_m, \nu \e_3\mathbf{v}_m \end{pmatrix}^T\;.
\end{align*}
For each $m\in\{0,\cdots n-1\}$ given, there are six eigenpairs (counting multiplicity)
determined by solving the following equation for $\nu$ and $\epsilon_i$:
\begin{align*}
\begin{pmatrix}
g_x\one + \nu g_v\one - \nu^2  & g\one _x\rho\one_{x,1} +  \nu g_v\one\rho\one_{v,1} &
\left(g\one _x\rho\one_{x,-1} +  \nu g_v\one \rho\one_{v,-1}\right)e^{-i\phi} \\
g_x\two\rho\two_{x,-1} + \nu g_v\two\rho\two_{v,-1} & g_x\two + \nu g_v\two - \nu^2  & g_x\two\rho\two_{x,1}  + \nu g_v\two\rho\two_{v,1} \\
\left(g_x\three\rho\three_{x,1} + \nu g_v\three\rho\three_{v,1}\right)e^{i\phi} & g_x\three\rho\three_{x,-1} +  \nu g_v\three\rho\three_{v,-1}  & g\three_x + \nu g_v\three - \nu^2
\end{pmatrix}
\begin{pmatrix}
\e_1\\
\e_2\\
\e_3
\end{pmatrix}
=
\begin{pmatrix}
0\\ 0\\ 0
\end{pmatrix}\;.
\end{align*}
\label{prop:charpoly}
\end{prop}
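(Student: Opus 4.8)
The plan is to exploit the block-circulant structure of $\mathbf{M}$. Every block appearing in \eqref{eq:linear_sys3} is either $\mathbf{0}$, a scalar multiple of $\mathbf{I}$, or a scalar multiple of $\mathbf{P}_+$ or $\mathbf{P}_-$. A direct computation with Definition \ref{def:evecs} shows $\mathbf{P}_+\mathbf{v}_m = e^{i\phi_m}\mathbf{v}_m$, $\mathbf{P}_-\mathbf{v}_m = e^{-i\phi_m}\mathbf{v}_m$, and of course $\mathbf{I}\mathbf{v}_m = \mathbf{v}_m$; so the entire Fourier basis $\{\mathbf{v}_m\}_{m=0}^{n-1}$ simultaneously diagonalizes all the blocks. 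First I would use this to observe that, for each fixed $m$, the six vectors obtained by placing $\mathbf{v}_m$ in one of the six $n$-dimensional blocks and $0$ elsewhere span an $\mathbf{M}$-invariant subspace $V_m$, and that $\mathbb{C}^{6n}=\bigoplus_{m=0}^{n-1}V_m$ since $\{\mathbf{v}_m\}$ is a basis of $\mathbb{C}^n$. Hence it suffices to diagonalize $\mathbf{M}$ on each $V_m$ separately, and in doing so we will account for all $6n$ eigenpairs.

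Next I would identify $V_m$ with $\mathbb{C}^6$ via the coordinates $(\epsilon_1,\epsilon_2,\epsilon_3,\delta_1,\delta_2,\delta_3)$ recording the coefficient of $\mathbf{v}_m$ in each block. Under this identification the restriction of $\mathbf{M}$ to $V_m$ is the $6\times6$ matrix $\hat{\mathbf{M}}(\phi_m)$ obtained from \eqref{eq:linear_sys3} by the substitutions $\mathbf{P}_+\mapsto e^{i\phi_m}$, $\mathbf{P}_-\mapsto e^{-i\phi_m}$, $\mathbf{I}\mapsto 1$, which has the $2\times2$ block form $\hat{\mathbf{M}}(\phi_m)=\left(\begin{smallmatrix}0 & I_3\\ B_x & B_v\end{smallmatrix}\right)$ with $B_x,B_v$ the evident $3\times3$ coefficient matrices. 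Writing an eigenvector of $\hat{\mathbf{M}}(\phi_m)$ for eigenvalue $\nu$ as $(\epsilon,\delta)^T$ with $\epsilon,\delta\in\mathbb{C}^3$, the top block of the eigenvalue equation gives $\delta=\nu\epsilon$ — which already yields the stated shape of ${\bf u}_\nu(\phi_m)$ — and substituting into the bottom block gives $B_x\epsilon+\nu B_v\epsilon=\nu^2\epsilon$, i.e. $(B_x+\nu B_v-\nu^2 I_3)\epsilon=0$. Expanding $B_x+\nu B_v-\nu^2 I_3$ entrywise reproduces exactly the $3\times3$ matrix in the statement, so a nontrivial $\epsilon$ exists precisely when that matrix is singular. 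Its determinant is a polynomial in $\nu$ whose only degree-$6$ contribution is $-\nu^6$, coming from the product of the three diagonal entries; thus for each $m$ there are exactly six roots $\nu$ counted with multiplicity, each furnishing an eigenpair of $\mathbf{M}$ of the asserted form, and these $6n$ pairs exhaust the spectrum by the direct-sum decomposition.

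The argument is essentially bookkeeping, so I do not expect a real obstacle; the two points requiring care are (i) fixing the eigenvalue convention for $\mathbf{P}_\pm$ so that the phases $e^{\pm i\phi}$ land in the correct entries of the $3\times3$ matrix, matching the placements of $\mathbf{P}_+$ and $\mathbf{P}_-$ in \eqref{eq:linear_sys3}, and (ii) the completeness claim, which is handled cleanly by the decomposition $\mathbb{C}^{6n}=\bigoplus_m V_m$ rather than by naive counting — this also legitimizes the ``counting multiplicity'' clause, since a repeated root of $\det(B_x+\nu B_v-\nu^2 I_3)$ may carry geometric multiplicity one. The genuine difficulty is deferred to the later sections, where one must extract an instability criterion from the sextic $\det(B_x+\nu B_v-\nu^2 I_3)=0$.
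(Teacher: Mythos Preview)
Your proposal is correct and follows essentially the same approach as the paper: both use that $\mathbf{v}_m$ diagonalizes $\mathbf{P}_\pm$, read off $\dot{\mathbf{z}}=\nu\mathbf{z}$ from the top block, and substitute into the bottom three rows of \eqref{eq:linear_sys3} to obtain the $3\times 3$ system. Your write-up is in fact more careful than the paper's sketch, since you make the invariant-subspace decomposition $\mathbb{C}^{6n}=\bigoplus_m V_m$ explicit and thereby justify completeness and the ``counting multiplicity'' clause, points the paper leaves implicit.
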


\begin{proof}
From equations \eqref{eq:linear_sys3} and \eqref{eq:simple-eqn}, we see that an eigenvector $\begin{pmatrix}\ {\bf z}\\ \dot {\bf z}\end{pmatrix}$
associated to the eigenvalue $\nu$ satisfies
${\bf\dot z}= \nu {\bf z}$. Now $\mathbf{P}_+^n=I$, and so $e^{i\phi_m}$ and ${\bf v}_m$ are the eigenvalues and
eigenvectors of $\mathbf{P}_+$, and $e^{-i\phi_m}$ and $v_m$ of $\mathbf{P}_-$. Then by substituting ${\bf u}_\nu$ into
\eqref{eq:linear_sys3}, one sees that these are the eigenvectors of ${\bf M}$.

For the second part, note that the eigenvector $u$ derived above has 4 unknowns. \color{black}
We can write
\begin{align} \label{eigvec31}
\mathbf{M}\mathbf{u} = \nu \mathbf{u}\;,
\end{align}
substitute $u$ of the first part, and substitute that in equation
\eqref{eq:linear_sys3}. We obtain three non-trivial equations (from the last three lines of
\eqref{eq:linear_sys3}), which can be simplified and rearranged to give the second part of
the proposition. (By linearity, if $u$ is a solution, then so is any multiple of $u$.
Thus 3 equations is enough.)
\color{black}
\end{proof}

In short, we can find all eigenpairs by setting to zero the determinant of the matrix in Proposition
\ref{prop:charpoly}. We obtain a polynomial $Q$ of degree six in $\nu$. In its full glory, the polynomial
is more than a little cumbersome. From now on, we take superscripts $g$ and $\rho$ modulo 3.
For example, $g_x^{(5)}=g\two_x$. This allows us to manage the expressions a little better.

\begin{definition} Let $a$, $b$, $c$, $d$, and $t$ be real numbers, define
\begin{align*}
D(a,b,c;t) \equiv abc(e^{it}-1)-(1+a)(1+b)(1+c)(e^{-it}-1), ~~\text{and}~~E(a,b,c,d) \equiv ab(1+c+cd)\;.
\end{align*}
\label{def:C(abc)}
\end{definition}
The following Lemma is the result of substantial bookkeeping which we leave to the reader.
\begin{lem} When $\phi=0$, the matrix of Proposition \ref{prop:charpoly} has determinant
$Q(\nu,\phi=0)$ equal to
\begin{equation*}
\begin{array}{rl}
-\nu^2 & \sum_{i=1}^3\, E(g_x^{(i)},g_x^{(i+1)},\rho_{x,1}^{(i)},\rho_{x,1}^{(i+1)})\\
-\nu^3 & \sum_{i=1}^3\,\left[E(g_x^{(i)},g_v^{(i+1)},\rho_{x,1}^{(i)},\rho_{v,1}^{(i+1)})+
E(g_v^{(i)},g_x^{(i+1)},\rho_{v,1}^{(i)},\rho_{x,1}^{(i+1)})\right]\\
-\nu^4 & \sum_{i=1}^3\,\left[g_x^{(i)}+E(g_x^{(i)},g_x^{(i+1)},\rho_{x,1}^{(i)},\rho_{x,1}^{(i+1)})\right]+\nu^5 \sum_{i=1}^3\, g_x^{(i)} -\nu^6
\end{array} \quad .
\end{equation*}
The full expression of the constant term of $Q(\nu,\phi)$ is $a_0(\phi)$, where $a_0(\phi)=g\one_x g\two_x g\three_x D(\rho\one_{x,1},\rho\two_{x,1},\rho\three_{x,1};\phi)$.
\label{lem:bookkeeping}
\end{lem}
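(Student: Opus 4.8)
The plan is to compute the determinant $Q(\nu,\phi)$ of the $3\times 3$ matrix $M(\nu,\phi)$ of Proposition \ref{prop:charpoly} directly and then read off the coefficients. Write $M(\nu,\phi)=L_x(\phi)+\nu L_v(\phi)-\nu^2 I$, where $L_x(\phi)$ carries $g_x^{(i)}$ on the diagonal, $g_x^{(i)}\rho_{x,1}^{(i)}$ in position $(i,i+1)$ and $g_x^{(i)}\rho_{x,-1}^{(i)}$ in position $(i,i-1)$ (indices mod $3$), except that the $(1,3)$ entry carries an extra $e^{-i\phi}$ and the $(3,1)$ entry an extra $e^{i\phi}$, and $L_v(\phi)$ is defined the same way with $v$ in place of $x$. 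At the outset I substitute the decentralization constraints \eqref{constraints3}, i.e. $\rho_{x,-1}^{(i)}=-1-\rho_{x,1}^{(i)}$ and $\rho_{v,-1}^{(i)}=-1-\rho_{v,1}^{(i)}$, so that every entry is expressed through the six quantities $g_x^{(i)},g_v^{(i)},\rho_{x,1}^{(i)},\rho_{v,1}^{(i)}$.

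Expanding $Q=\det M=\sum_{\sigma\in S_3}\sgn(\sigma)\prod_i M_{i,\sigma(i)}$ splits the work into four blocks: (i) the identity permutation gives the degree-six product $\prod_{i=1}^3(g_x^{(i)}+\nu g_v^{(i)}-\nu^2)$; (ii) the $3$-cycle $(1\,2\,3)$ gives $e^{i\phi}\prod_i(g_x^{(i)}\rho_{x,1}^{(i)}+\nu g_v^{(i)}\rho_{v,1}^{(i)})$; (iii) the $3$-cycle $(1\,3\,2)$ gives $-e^{-i\phi}\prod_i\bigl(g_x^{(i)}(1+\rho_{x,1}^{(i)})+\nu g_v^{(i)}(1+\rho_{v,1}^{(i)})\bigr)$, the sign and the $(1+\rho)$'s coming from the constraints; (iv) the three transpositions give a cyclic sum $\sum_{i=1}^3\bigl(g_x^{(i)}\rho_{x,1}^{(i)}+\nu g_v^{(i)}\rho_{v,1}^{(i)}\bigr)\bigl(g_x^{(i+1)}(1+\rho_{x,1}^{(i+1)})+\nu g_v^{(i+1)}(1+\rho_{v,1}^{(i+1)})\bigr)\bigl(g_x^{(i+2)}+\nu g_v^{(i+2)}-\nu^2\bigr)$, which is $\phi$-independent because the only two entries bearing a phase, $(1,3)$ and $(3,1)$, occur together in the transposition $(1\,3)$ and in no other transposition, so their phases cancel there.

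For the second assertion, set $\nu=0$: one factors $g_x^{(1)}g_x^{(2)}g_x^{(3)}$ out of all four blocks and is left with $1+e^{i\phi}r_1r_2r_3-e^{-i\phi}(1+r_1)(1+r_2)(1+r_3)+\sum_i r_i(1+r_{i+1})$, where $r_i=\rho_{x,1}^{(i)}$. The elementary-symmetric identity $1+\sum_i r_i(1+r_{i+1})=(1+r_1)(1+r_2)(1+r_3)-r_1r_2r_3$ collapses this to $r_1r_2r_3(e^{i\phi}-1)-(1+r_1)(1+r_2)(1+r_3)(e^{-i\phi}-1)=D(r_1,r_2,r_3;\phi)$, which is $a_0(\phi)$. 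For the first assertion, set $\phi=0$, so $e^{\pm i\phi}=1$, expand each block in powers of $\nu$, and collect. Two structural observations organize this: the coefficients of $\nu^6$ ($=-1$) and $\nu^5$ come only from block (i) and are immediate; and at $\phi=0$ both $L_x(0)$ and $L_v(0)$ have zero row sums, so $M(\nu,0)\mathbf{1}=-\nu^2\mathbf{1}$, whence $-\nu^2\mid Q(\nu,0)$, which kills the $\nu^0$ and $\nu^1$ coefficients for free and agrees with the constant term ($D(\cdots;0)=0$). It then remains to gather the coefficients of $\nu^2,\nu^3,\nu^4$ out of the four blocks, substitute the constraints, and verify that after cancellation the surviving three-term products of the form $g^{(i)}g^{(i+1)}\rho^{(i)}(1+\rho^{(i+1)})$ (and their $\nu$-expansions) repackage via $ab(1+c+cd)=E(a,b,c,d)$ into the stated sums.

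The only real obstacle is the volume and sign discipline of this last step: for each permutation one must track which linear factor sits at index $i$, which at $i+1$, and which quadratic factor is the ``diagonal'' one, since the $E$-pattern and the index shift $i\mapsto i+1$ become visible only once these are aligned consistently, and one must also confirm that the spurious mixed monomials (those cubic in the $g$'s) cancel in pairs. The $\phi$-independence of block (iv) and the divisibility by $-\nu^2$ roughly halve the computation and furnish the consistency checks — the degree count, the vanishing of the low-order terms, and the match with $a_0(0)=0$ — that make the bookkeeping reliable.
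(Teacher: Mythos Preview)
Your proposal is correct and follows exactly the route the paper implicitly takes: the paper itself offers no proof beyond ``the result of substantial bookkeeping which we leave to the reader,'' and what you outline \emph{is} that bookkeeping. Your organization in fact goes beyond what the paper supplies: the permutation-by-permutation split of the $3\times3$ determinant, the observation that the three transpositions are $\phi$-free because the only phased entries sit in positions $(1,3)$ and $(3,1)$, the elementary-symmetric identity $1+\sum_i r_i(1+r_{i+1})=(1+r_1)(1+r_2)(1+r_3)-r_1r_2r_3$ that collapses the $\nu=0$ term to $D$, and the row-sum argument $M(\nu,0)\mathbf 1=-\nu^2\mathbf 1$ forcing $\nu^2\mid Q(\nu,0)$ are all clean structural shortcuts that make the calculation checkable rather than merely asserted.
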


To simplify the statement of the main results further, we also need the following definition.

\begin{definition} For $j$ and $k$ positive, we define $~\alpha^{(k)}_{x,j} \equiv \rho^{(k)} _{x,j} + \rho^{(k)}_{x,-j} ~~\text{and}~~ \beta^{(k)}_{x,j} \equiv \rho^{(k)} _{x,j} - \rho^{(k)}_{x,-j}$.

Because of the constraint \eqref{constraints3}, the $\alpha$'s are equal to -1 in this case (but
not in the next section).
\label{def:alphas-betas}
\end{definition}

\begin{thm} If any of the following conditions are satisfied, then for large $N$, the system
given by \eqref{three_cars_model} on the circle is not (linearly) stable.

$(i)$ $g\one_x = 0$ or $g\two_x = 0$, or $g\three_x = 0$.

$(ii)$ $\sum_{i=1}^3\, E(g_x^{(i)},g_x^{(i+1)},\rho_{x,1}^{(i)},\rho_{x,1}^{(i+1)})
= 0$.

$(iii)$ $ \displaystyle g\one_x g\two_x g\three_x\,\left[\sum_{i =1}^3 \beta^{(i)}_{x,1} +
\prod_{i = 1}^3 \beta^{(i)}_{x,1}\right] \ne 0$.
\label{theorem_triatomic}
\end{thm}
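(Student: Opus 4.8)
The plan is to work entirely with the block decomposition from Proposition~\ref{prop:charpoly}: writing $\phi_m=2\pi m/n$, the spectrum of $\mathbf M$ (with multiplicity) is the union over $m\in\{0,\dots,n-1\}$ of the six roots in $\nu$ of $Q(\nu,\phi_m)=\sum_{k=0}^{6}a_k(\phi_m)\nu^k$, the determinant of the matrix in Proposition~\ref{prop:charpoly}. From Lemma~\ref{lem:bookkeeping} I will use three facts: $a_0(0)=a_1(0)=0$; the coefficient $a_2(0)=-\sum_{i=1}^3 E(g_x^{(i)},g_x^{(i+1)},\rho_{x,1}^{(i)},\rho_{x,1}^{(i+1)})$; and $a_0(\phi)=g_x\one g_x\two g_x\three\, D(\rho\one_{x,1},\rho\two_{x,1},\rho\three_{x,1};\phi)$. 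By Definition~\ref{def_stability}, to prove ``not stable'' it suffices either to exhibit an eigenvalue of $\mathbf M$ with positive real part, or to show that the eigenvalue $0$ has algebraic multiplicity at least three.

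Cases $(i)$ and $(ii)$ should fall out immediately. If $g_x\one=0$ the first row of the matrix in Proposition~\ref{prop:charpoly} vanishes at $\nu=0$, so $Q(0,\phi_m)=0$ for every $m$; combined with the factor $-\nu^2$ in the $m=0$ block (Lemma~\ref{lem:bookkeeping}) this gives the eigenvalue $0$ of $\mathbf M$ algebraic multiplicity at least $n+1$, which exceeds $2$ once $N$ is large, and the cases $g_x\two=0$, $g_x\three=0$ use the second and third rows. If instead $\sum_{i=1}^3 E(\dots)=0$, i.e.\ $a_2(0)=0$, then $a_0(0)=a_1(0)=a_2(0)=0$ makes $\nu=0$ a root of $Q(\cdot,0)$ of multiplicity at least three, hence an eigenvalue of $\mathbf M$ of algebraic multiplicity at least three; in both cases Definition~\ref{def_stability} fails.

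Case $(iii)$ is the real content. Here I would first observe that we may assume $(i)$ and $(ii)$ fail --- otherwise we are already done --- so $g_x\one g_x\two g_x\three\neq 0$, $a_2(0)\neq 0$, and $(iii)$ reduces to $\sum_i\beta^{(i)}_{x,1}+\prod_i\beta^{(i)}_{x,1}\neq 0$. Since $Q(\nu,0)=\nu^2\big(a_2(0)+O(\nu)\big)$ with $a_2(0)\neq 0$, exactly two roots $\nu_\pm(\phi)$ of $Q(\cdot,\phi)$ approach $0$ as $\phi\to 0$, and the goal is to show one of them has positive real part for small $\phi>0$. Differentiating $a_0$ and using the decentralization constraint \eqref{constraints3} in the form $1+\rho^{(i)}_{x,1}=-\rho^{(i)}_{x,-1}$ gives
\[
a_0'(0)=i\, g_x\one g_x\two g_x\three\Big({\textstyle\prod_i}\,\rho^{(i)}_{x,1}-{\textstyle\prod_i}\,\rho^{(i)}_{x,-1}\Big),
\]
and an elementary symmetric-function identity, using $\rho^{(i)}_{x,\pm1}=\tfrac12(-1\pm\beta^{(i)}_{x,1})$, rewrites the bracket as $\tfrac14\big(\sum_i\beta^{(i)}_{x,1}+\prod_i\beta^{(i)}_{x,1}\big)$; thus under $(iii)$ the number $a_0'(0)$ is purely imaginary and nonzero. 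Substituting $\nu=\sqrt\phi\,w$ into $Q(\nu,\phi)=0$ and dividing by $\phi$ yields $a_2(0)w^2+a_0'(0)+O(\sqrt\phi)=0$, so by continuity of roots the two small branches satisfy $w(\phi)\to\pm\sqrt{-a_0'(0)/a_2(0)}$ as $\phi\to 0^+$; since $-a_0'(0)/a_2(0)$ is purely imaginary and nonzero this square root has nonzero real part, so for one branch $\operatorname{Re}\nu(\phi)=\sqrt\phi\,\operatorname{Re}w(\phi)>0$ for all small $\phi>0$. Taking $n$ large enough that $\phi_1=2\pi/n$ lies in this range exhibits an eigenvalue of $\mathbf M$ with positive real part.

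The hard part is the last paragraph: I must be sure that precisely two eigenvalue branches leave the origin and that their Puiseux expansion has leading term a constant times $\phi^{1/2}$ whose real part is governed by $a_0'(0)/a_2(0)$ --- equivalently, that the Newton polygon of $Q$ at the origin is the single segment joining $(2,0)$ and $(0,1)$. This is exactly where the standing assumptions $a_2(0)\neq 0$ and $a_0'(0)\neq 0$ enter; the identification of $a_0'(0)$ and the reductions for $(i)$ and $(ii)$ are routine given Lemma~\ref{lem:bookkeeping}.
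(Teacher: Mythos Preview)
Your proposal is correct and follows essentially the same route as the paper's proof. For parts $(i)$ and $(ii)$ your arguments coincide with the paper's: a vanishing $g_x^{(i)}$ forces $\nu\mid Q(\nu,\phi_m)$ for every $m$, and $a_2(0)=0$ forces $\nu^3\mid Q(\nu,0)$, in either case violating Definition~\ref{def_stability}.

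For part $(iii)$ the idea is the same but the packaging differs. The paper invokes its Appendix Proposition~\ref{prop:appendix}, which proves via a Rouch\'e--based induction on the degree that when $a_0(0)=a_1(0)=0$, $a_2(0)\neq 0$, and $a_0'(0)\neq 0$, the zero set of $Q$ near the origin consists of two curves tangent to $\pm\sqrt{-a_0'(0)/a_2(0)}\,\sqrt{t}$, hence one arm enters the right half-plane. You carry out the same Puiseux/Newton--polygon computation inline (the edge from $(2,0)$ to $(0,1)$) and add the useful explicit remark that $a_0'(0)$ is purely imaginary while $a_2(0)$ is real, so $\sqrt{-a_0'(0)/a_2(0)}$ automatically has nonzero real part. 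Your reduction ``we may assume $(i)$ and $(ii)$ fail'' exactly supplies the hypotheses $a_2(0)\neq 0$ and $g_x\one g_x\two g_x\three\neq 0$ that the paper also needs for its Appendix proposition. The only place where the paper is more careful is in justifying, via Rouch\'e, that exactly two branches (and not more) emanate from the origin with the claimed leading order as the higher-degree coefficients of the rescaled polynomial degenerate---precisely the point you flag in your final paragraph.
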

\color{black}

\begin{proof} We start with part $(i)$. Suppose for example that $g\one_x = 0$. Then the first row of the matrix
in Proposition \ref{prop:charpoly} has a factor $\nu$. Since the determinant is a linear function
of the rows, it follows that the determinant of that matrix also has a factor $\nu$.
This implies that the zero eigenvalue has multiplicity of at least $N$, contradicting
Definition \ref{def_stability}. Suppose $(ii)$ holds. Then from Lemma \ref{lem:bookkeeping} we see that
for $\phi=0$, we get multiplicity 3 for the eigenvalue zero.
This violates Definition \ref{def_stability}. Part $(iii)$ The eigenvalues of $\bf M$ in equation (\ref{eq:simple-eqn}) are the roots of $Q(\nu,\phi)=\sum_{i=2}^6\,a_i(\phi)z^i +2a_1(\phi)z+a_0(\phi),$
where the $a_i(0)$ are given in the first part of Lemma \ref{lem:bookkeeping}. The second part
of that lemma states that $a_0'(0)=g\one_x g\two_x g\three_x \, \dfrac{\partial}{\partial \phi}D(\rho\one_{x,1},\rho\two_{x,1},\rho\three_{x,1};\phi)\big|_{\phi=0}\;.$
According to Proposition \ref{prop:appendix}, the system is unstable if $~a_0(0)=a_1(0)=0 \quad \textrm{and} \quad a_2(0)\neq 0 \quad \textrm{and} \quad a_0'(0)\neq 0.$
Substituting $\rho^{(i)}_{x,1} = \dfrac{1}{2}\left(\beta^{(i)}_{x,1} - 1\right)$ in $a_0'(0)$
(using Definition \ref{def:alphas-betas} and the constraints \eqref{constraints3}) yields part
$(iii)$.
\color{black}
\end{proof}

According to this proof, in case (i) and (ii), the system has too many eigenvalues with zero real part
(this is called marginal stability), but not necessarily with positive real part. \emph{Instability}
in the sense of Definition \ref{def_stability} requires at least one eigenvalue with
\emph{positive} real part. The conjectures in \cite{CantosTrans} state that instability
on the circle implies some form of \emph{instability} on the line.

\begin{cor} Using the conjectures of \cite{CantosTrans}, we obtain that if
$\sum_{i=1}^3\, E(g_x^{(i)},g_x^{(i+1)},\rho_{x,1}^{(i)},\rho_{x,1}^{(i+1)})\ne 0$ and
\begin{equation*}
 \displaystyle g\one_x g\two_x g\three_x\,\left[\sum_{i =1}^3 \beta^{(i)}_{x,1} + \prod_{i = 1}^3 \beta^{(i)}_{x,1}\right] \neq  0\;,
\end{equation*}
\noindent
then the system on the line given by (\ref{three_cars_model})
has some form of instability (Definitions \ref{def_stability} or \ref{def_flock-stability}).
\label{cor:triatomic}
\end{cor}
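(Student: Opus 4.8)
The plan is to derive Corollary \ref{cor:triatomic} as an essentially immediate consequence of Theorem \ref{theorem_triatomic} together with the conjectures of \cite{CantosTrans}. First I would observe that the hypotheses of the corollary are precisely the negations of conditions $(ii)$ and $(iii)$ of the theorem, read in the right direction: the corollary assumes $\sum_{i=1}^3 E(g_x^{(i)},g_x^{(i+1)},\rho_{x,1}^{(i)},\rho_{x,1}^{(i+1)})\neq 0$, which is the failure of $(ii)$, and it assumes $g\one_x g\two_x g\three_x\left[\sum_{i=1}^3\beta^{(i)}_{x,1}+\prod_{i=1}^3\beta^{(i)}_{x,1}\right]\neq 0$, which is exactly condition $(iii)$ holding. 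So the logical structure is: the corollary's second hypothesis already triggers case $(iii)$ of the theorem directly, and hence for large $N$ the system on the circle is not linearly stable.

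Next I would spell out why the extra hypothesis (failure of $(ii)$) is included and what role it plays. As the remark following the theorem's proof points out, cases $(i)$ and $(ii)$ only guarantee \emph{marginal} stability — extra eigenvalues on the imaginary axis — whereas case $(iii)$, via Proposition \ref{prop:appendix}, genuinely produces an eigenvalue with \emph{positive} real part for large $n$. The conjectures of \cite{CantosTrans} are stated for instability on the circle (an eigenvalue with positive real part) implying some form of instability on the line. Therefore, to invoke those conjectures cleanly, one wants to be in the genuine-instability regime, i.e. one wants condition $(iii)$ to be the operative cause. I would note that condition $(iii)$ presupposes $g\one_x g\two_x g\three_x\neq 0$ (otherwise the product in $(iii)$ vanishes), so case $(i)$ is automatically excluded; and the corollary's first hypothesis explicitly excludes case $(ii)$. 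Thus under both hypotheses of the corollary, the nonzero-real-part instability of case $(iii)$ is not "masked" by the degenerate marginal behaviors of $(i)$ or $(ii)$.

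With that in place, the argument is: assume the two hypotheses of the corollary. By Theorem \ref{theorem_triatomic}$(iii)$, for large $N$ the system \eqref{three_cars_model} on the circle has an eigenvalue with positive real part, hence is unstable in the sense of Definition \ref{def_stability}. Invoking the conjectures of \cite{CantosTrans}, instability on the circle for large $N$ implies that the system on the line (with nontrivial boundary conditions) is unstable either in the sense of Definition \ref{def_stability} or in the sense of Definition \ref{def_flock-stability}. This is exactly the conclusion of the corollary. I would close by remarking, for the reader, that the nonlinear correction term $\prod_{i=1}^3\beta^{(i)}_{x,1}$ — which does not appear in the single-agent-type theory — is what makes stability a codimension-one phenomenon, as advertised in the introduction.

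The main "obstacle" here is not really a mathematical difficulty but a matter of bookkeeping and citation discipline: one must be careful that the conjectures of \cite{CantosTrans} are being applied to exactly the kind of instability they concern (positive real part, large $N$), and that the algebraic identification of the corollary's hypotheses with the negation of $(ii)$ and with $(iii)$ itself is done using the constraints \eqref{constraints3} and Definition \ref{def:alphas-betas} correctly — in particular the substitution $\rho^{(i)}_{x,1}=\tfrac12(\beta^{(i)}_{x,1}-1)$ already carried out in the proof of the theorem. Since all the genuine work (the determinant computation in Lemma \ref{lem:bookkeeping}, the perturbation analysis in Proposition \ref{prop:appendix}, and the reduction in Theorem \ref{theorem_triatomic}) is done upstream, the corollary itself is a short deduction.
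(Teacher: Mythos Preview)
Your proposal is correct and follows essentially the same approach as the paper: Theorem \ref{theorem_triatomic}(iii) yields an eigenvalue with positive real part on the circle, and the conjectures of \cite{CantosTrans} then transfer this to some form of instability on the line. One small refinement: the first hypothesis is included not so much to prevent ``masking'' by the marginal cases (i)--(ii) as because it is exactly the condition $a_2(0)\neq 0$ that Proposition \ref{prop:appendix} requires in the proof of part (iii); the paper makes this identification explicit in the remark immediately following the corollary.
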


\noindent
A generalization of Proposition \ref{prop:appendix} shows that the condition $a_2(0)\neq 0$
is not necessary to guarantee the presence of eigenvalues with positive real part for large $n$.
Since $a_2(0)\neq 0$ corresponds to the first condition in the corollary (see Lemma \ref{lem:bookkeeping}), we drop that condition from now on. Details will appear elsewhere \cite{LyonsVeerman}.
\begin{figure}
\begin{center}
\begin{subfigure}{.5\textwidth}
\centering
\includegraphics[width=8.8cm,height=8.5cm]{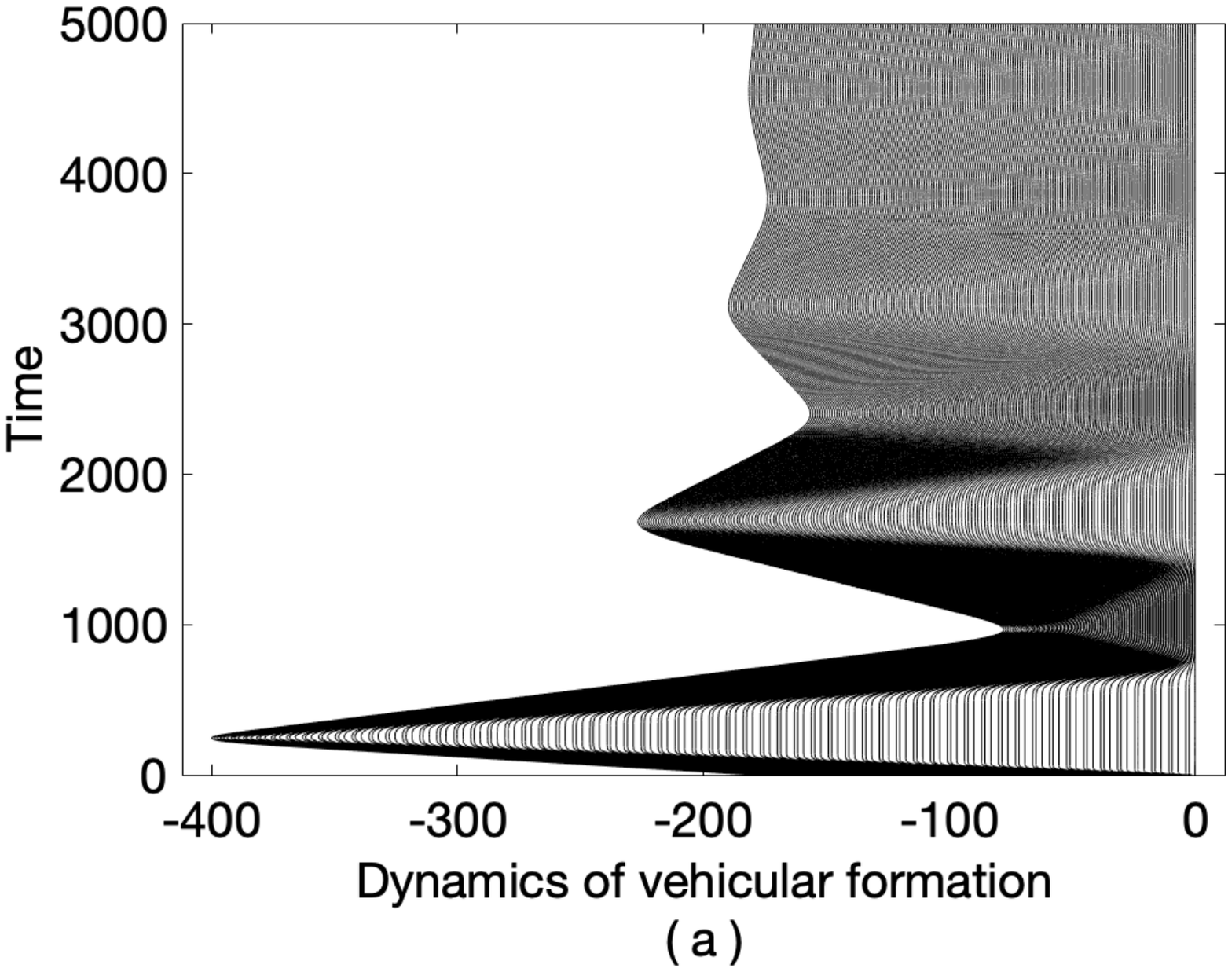}
\end{subfigure}%
\begin{subfigure}{.5\textwidth}
\centering
\includegraphics[width=8.8cm,height=8.5cm]{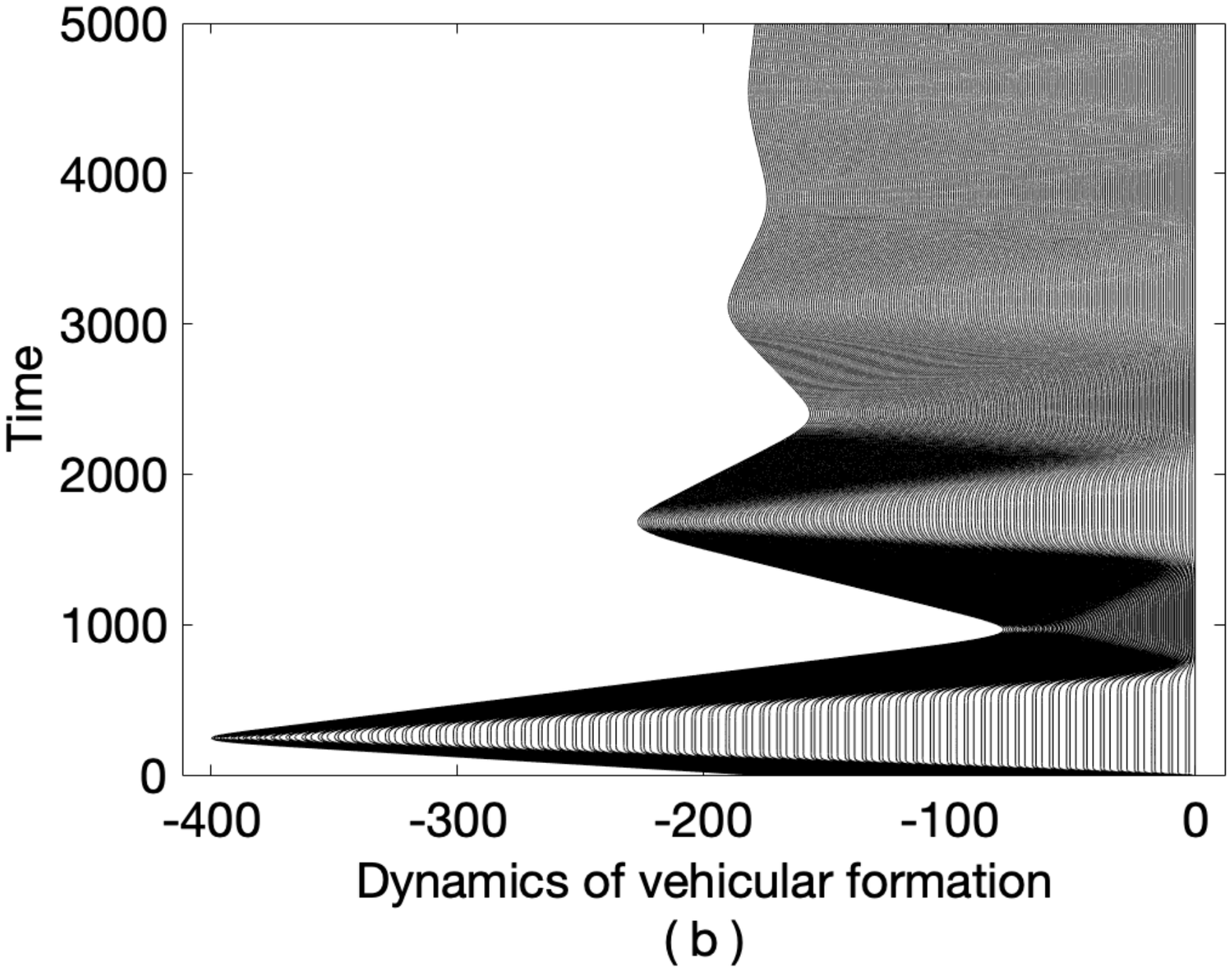}
\end{subfigure}
\vspace{-4\baselineskip}
\caption{\emph{Dynamics of a stable system with $N=180$ vehicles with  $\boldsymbol{\rho} =-(0.6,0.8,0.142857,0.3,0.3,0.3)$ and $\boldsymbol{g} = -(1,1,1,1.3,1.3,1.3)$.} (a) \emph{Boundary Condition Type I. Maximum amplitude of $-221.0$ at $t=244.6$.}
(b) \emph{Boundary Condition Type II. Maximum amplitude of $-220.8$ at $t=-244.4$. }}
\label{fig:picN1}
\end{center}
\end{figure}

\begin{figure}
\begin{center}
\begin{subfigure}{.5\textwidth}
\centering
\includegraphics[width=8.8cm,height=8.5cm]{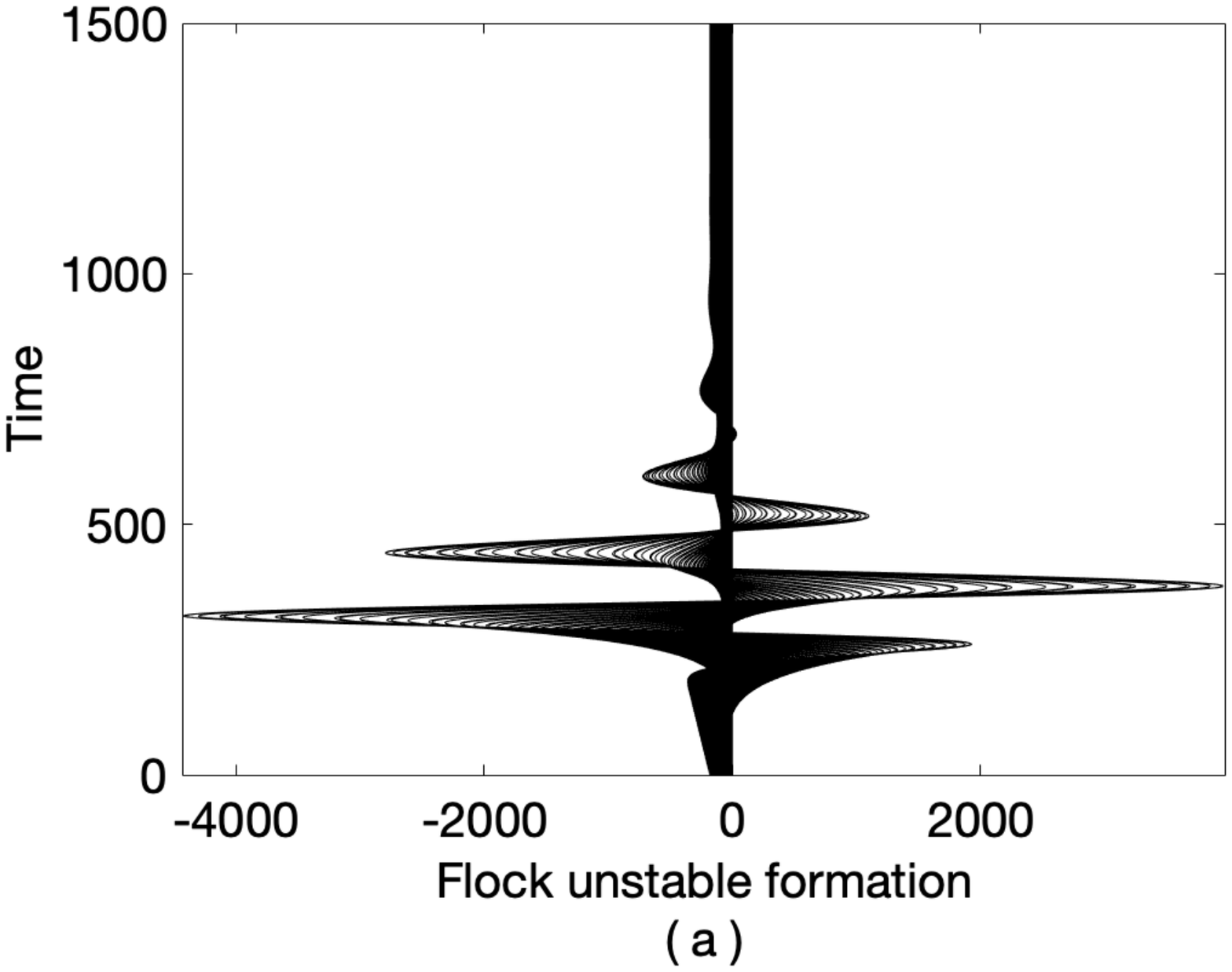}
\end{subfigure}%
\begin{subfigure}{.5\textwidth}
\centering
\includegraphics[width=8.7cm,height=8.5cm]{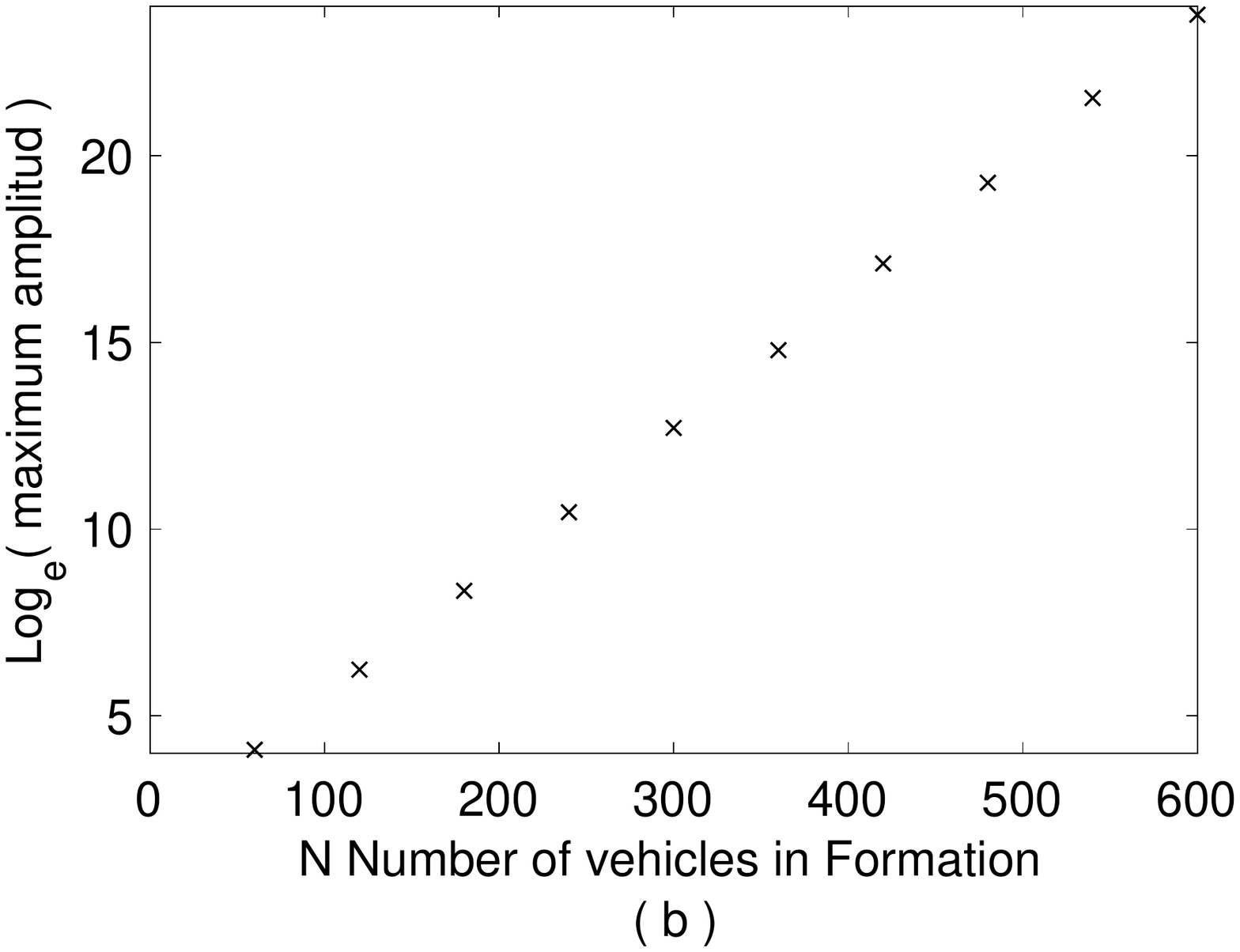}
\end{subfigure}
\vspace{-4\baselineskip}
\caption{\emph{Dynamics of flock unstable systems.} (a) \emph{Dynamics of a
flock unstable system of $180$ vehicles} with  $\boldsymbol{\rho} =-(0.6,0.8,0.10,0.3,0.3,0.3)$ and $g_{x,v} = -(1,1,1,1.3,1.3,1.3)$. (b) Same coefficients as (a), we see that \emph{amplitude of flock unstable systems increase exponentially as the number $N$ of cars increases.}}
\label{fig:picN2}
\end{center}
\end{figure}

We perform simulations
to see if this conclusion is borne out by simulations on the real line (independent
of reasonable boundary conditions).
Similar to what was done in \cite{herbrych2015dynamics}, we consider two sets of boundary conditions.
We will call them Type I and Type II boundary conditions.
Since we want to maintain the centralized character of the systems, both sets of boundary conditions
must maintain the ``Laplacian" property, namely that row-sums of each Laplacian are zero.
Type I adjusts the central coefficients $\rho^{(i)}_{x,0} $, and $\rho^{(i)}_{v,0}$ on the boundaries
as: $\ddot{z}\one_{1} = 0 ~~ \text{and}~~\ddot{z}\three_n = g\three_x \left(-\rho\three_{x,-1}z\three_n + \rho\three_{x,-1} z\two_n \right) + g\three_v \left(- \rho\three_{v,-1}\dot z\three_n + \rho\three_{v,-1}\dot z\two_n \right)$.

In Type II boundary conditions, we keep the central coefficients $\rho^{(i)}_{x,0} $, and
$\rho^{(i)}_{v,0}$ equal to 1 and we adjust the remaining coefficients accordingly: $\ddot{z}\one_{1} = 0 ~~\text{and}~~\ddot{z}\three_n = g\three_x \left(z\three_n - z\two_n \right) + g\three_v \left(\dot z\three_n - \dot z\two_n \right)$.

We run simulations of the system in $\mathbb{R}$ considering these two boundary conditions with
initial condition:
\begin{align*}
z^{(i)}_k(0) = \dot z^{(i)}_k(0) = 0 \quad \textrm{\underline{except}} \quad
\dot{z}^{(1)}_1(0) = 1\;.
\end{align*}
To shorten our notation, let us write the parameters of our system as $\boldsymbol{\rho} = (\rho\one_{x,1},\rho\two_{x,1}, \rho\three_{x,1}, \rho\one_{v,1},\rho\two_{v,1}, \rho\three_{v,1})$, and $\boldsymbol{g} = (g\one_x,g\two_x,g\three_x,g\one_v,g\two_v,g\three_v)$. \color{black}
 Figure \ref{fig:picN1}(a) and \ref{fig:picN1}(b) are numerical simulations of $n=60$ vehicles of each type on the line with parameters given in the Figure.
These parameters satisfy Corollary \ref{cor:triatomic}. 
 Thus $\sum_{i =1}^3 \beta^{(i)}_{x,1}=-0.0858$, while
$\sum_{i =1}^3 \beta^{(i)}_{x,1} + \prod_{i = 1}^3 \beta^{(i)}_{x,1}=0$.
So it is far from satisfying the first, but satisfies the stability condition derived
in this section. From the figures, it is apparent that the system is stable in the sense
both definitions \color{black}, and that
the outcome is largely independent of the type of boundary condition.

On the other hand, Figure \ref{fig:picN2}(a) shows the typical dynamics of
a \emph{flock unstable} system. We see that around time $t=300$, one of the leader-agent distances
is roughly 4000 units from its desired value. The largest such distance in an otherwise stable system is called the \emph{magnitude of the transient} \cite{VT2010,TVS2012}. The stability of the system guarantees
its ultimate return to equilibrium for large $t$. What happens here is that as the number $N$
of agents grows, the magnitude of the transient grows exponentially in $N$. This is illustrated in
Figure \ref{fig:picN2}(b) where the logarithm of the magnitude of the transients is plotted
as function of the number of vehicles. Because of this exponential increase, we get very large
transients even for moderate $N$. Obviously, for traffic purposes such systems are undesirable. The parameters of  Figure \ref{fig:picN2}(a) are similar to Figure \ref{fig:picN1}, except $\boldsymbol{\rho} =-(0.6,0.8,0.10,0.3,0.3,0.3)$ satisfying $\sum_{i =1}^3 \beta^{(i)}_{x,1}=0$, but not the condition derived in this section.

\section{Periodic Arrangements with Next Nearest Neighbor Interactions}
\label{chap:nextnearest}

Next nearest neighbor interaction means that a vehicle can see up to two vehicles in front and behind it.
Although such systems with identical vehicles were included in \cite{cantos2016signal}, they were more thoroughly
studied in \cite{herbrych2015dynamics}, where it was shown that for certain parameter values, these systems
can generate so-called reflectionless waves.
In this section, we consider the stability problem for the more complicated case of
flocks of type ...2-1-2-1 with next nearest neighbor interaction (see Figure \ref{fig-arrangement2}).
With $n$ agents of each type, we have $N=2n$ agents all together.

Similarly as the previous chapter, for $k = 1,\dots,n$, the relevant equations of motion become
\begin{align} \label{NN_diatomic_sys}
\begin{split}
\ddot{z}\one_k & = g\one_x({z}\one_k+\rho\one_{x,1}{z}\two_k+\rho\one_{x,-1}{z}\two_{k-1}+
\rho\one_{x,2}{z}\one_{k+1}+\rho\one_{x,-2}{z}\one_{k-1})\\
& ~~~+ g\one_v(\dot{z}\one_k+\rho\one_{v,1}\dot{z}\two_k+\rho\one_{v,-1}\dot{z}\two_{k-1}+
\rho\one_{v,2}\dot{z}\one_{k+1}+\rho\one_{v,-2}\dot{z}\one_{k-1})\\
\ddot{z}\two_k &= g\two_x(z\two_k+\rho\two_{x,-1}z\one_k+\rho\two_{x,1}z\one_{k+1}+
\rho\two_{x,-2}z\two_{k-1}+\rho\two_{x,2}z\two_{k+1}) \\
&~~~+g\two_v(\dot{z}\two_k+\rho\two_{v,-1}\dot{z}\one_k+\rho\two_{v,1}\dot{z}\one_{k+1}+
\rho\two_{v,-2}\dot{z}\two_{k-1}+\rho\two_{v,2}\dot{z}\two_{k+1})
\end{split} \quad .
\end{align}
Because we assume the equations are decentralized, we get the constraints:
\begin{align}
\sum_{j=-2, j\ne 0}^2\rho^{(i)}_{x,j} = -1\; , \quad \sum_{j=-2, j\ne 0}^2\rho^{(i)}_{v,j} = -1\;.
\label{eq:constraint_minus_one}
\end{align}

As in Section \ref{chap:nearest}, we formulate the system with periodic boundary conditions and
investigate its stability. \color{black} That system can be written more compactly as \eqref{eq:simple-eqn}. But now $\bf M$ is given by
\begin{align}
\left( \begin{array}{c|c|c|c}\mathbf{0} & \mathbf{0} & \mathbf{I} & \mathbf{0}\\ \hline
\mathbf{0} & \mathbf{0} & \mathbf{0} & \mathbf{I}\\ \hline
g_x\one\mathbf{B}\one_x & g_x\one\mathbf{A}_x\one & g_v\one\mathbf{B}\one_v & g_v\one\mathbf{A}_v\one \\ \hline
g_x\two\mathbf{A}_x\two & g_x\two\mathbf{B}\two_x & g_v\two\mathbf{A}_v\two & g_v\two\mathbf{B}\two_v
\end{array}\right)
 \;.
\label{linear_sys}
\end{align}
The $n\times n$ matrices $\bf A$ and $\bf B$ are defined below in terms of the permutation matrices
${\bf P}_\pm$ of \eqref{permut_matrices}. All matrices ${\bf A}$
and ${\bf B}$ are circulant $n\times n$ \color{black} matrices. Thus in the basis ${\bf v}_m$ given in Definition
\ref{def:evecs} is an eigenbasis for all, and the eigenvalues are trivial to compute.
We list all matrices and their eigenvalues in \eqref{matrices_NNN}.

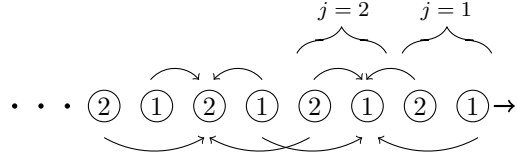
\begin{figure}[h]
\centering
\begin{tikzpicture}[scale = .7]
\draw (0,0) circle [radius = .3];
\draw (1,0) circle [radius = .3];
\draw (2,0) circle [radius = .3];
\draw (3,0) circle [radius = .3];

\draw (-1,0) circle [radius = .3];
\draw (-2,0) circle [radius = .3];
\draw (-3,0) circle [radius = .3];
\draw (-4,0) circle [radius = .3];

\draw[fill = black] (-4.7,0) circle [radius = .04];
\draw[fill = black] (-5.2,0) circle [radius = .04];
\draw[fill = black] (-5.7,0) circle [radius = .04];

\draw [thick,->] (3.4,0) -- (3.8,0);

\draw (3,0) node {$1$};
\draw (2,0) node {$2$};
\draw (1,0) node {$1$};
\draw (0,0) node {$2$};
\draw (-1,0) node {$1$};
\draw (-2,0) node {$2$};
\draw (-3,0) node {$1$};
\draw (-4,0) node {$2$};

\draw[->]  (-1.,-.6) arc (-120:  -60:1.9cm);
\draw[<-]  (1.2,-.6) arc (-120:  -60:1.9cm);

\draw[<-]  (.9,.5) arc (40:  140:.6cm);
\draw[->]  (1.9,.5) arc (40:  140:.6cm);

\draw[->]  (-4.,-.6) arc (-120:  -60:1.9cm);
\draw[<-]  (-2,-.6) arc (-120:  -60:1.9cm);

\draw[<-]  (-2.2,.5) arc (40:  140:.6cm);
\draw[->]  (-1,.5) arc (40:  140:.6cm);

\draw [decorate,decoration={brace,amplitude=10pt},xshift=-4pt,yshift=0pt] (1.8,1) -- (3.5,1);
\draw  (2.5,1.8) node {\footnotesize $j = 1$};

\draw [decorate,decoration={brace,amplitude=10pt},xshift=-4pt,yshift=0pt] (-0.2,1) -- (1.5,1);
\draw  (.55,1.8) node {\footnotesize $j = 2$};

\end{tikzpicture}
\caption{\emph{Periodic arrangement of flock with two types of vehicles, labeled by 1 and 2.
Each vehicle uses information from four others; the arrows indicate information flow.
At time $t=0$, the first vehicle start moving to the right.}
\label{fig-arrangement2}}
\end{figure}


\begin{equation}
\begin{array}{llll}
\mathbf{A}_x \one &= \rho\one_{x,1}\mathbf{I}+\rho\one_{x,-1}\mathbf{P}_- \;;&  \lambda_{x}\one(\phi)
&= \rho\one _{x,1} +\rho\one_{x,-1}e^{-i \phi}\;. \\
\mathbf{A}_x \two &= \rho\two_{x,-1}\mathbf{I}+\rho\two_{x,1}\mathbf{P}_+ \;;&  \lambda_{x}\two(\phi)
&= \rho\two _{x,-1} +\rho\two _{x,1}e^{i \phi}\;. \\
\mathbf{A}_v \one &= \rho\one_{v,1}\mathbf{I}+\rho\one_{v,-1}\mathbf{P}_- \;;&  \lambda_{v}\one(\phi)
&= \rho\one _{v,1} +\rho\one_{v,-1}e^{-i \phi}\;. \\
\mathbf{A}_v \two &= \rho\two_{v,-1}\mathbf{I}+\rho\two_{v,1}\mathbf{P}_+ \;;&  \lambda_{v}\two(\phi)
&= \rho\two _{v,-1} +\rho\two _{v,1}e^{i \phi}\;. \\
\mathbf{B}_x \one &= \mathbf{I} + \rho\one_{x,-2}\mathbf{P}_- + \rho\one_{x,2}\mathbf{P}_+ \;;&
\mu_{x}\one(\phi) &= 1+ \rho\one _{x,2}e^{i \phi} +\rho\one_{x,-2}e^{-i \phi}\;. \\
\mathbf{B}_x \two &= \mathbf{I} + \rho\two_{x,-2}\mathbf{P}_- + \rho\two_{x,2}\mathbf{P}_+ \;;&
\mu_{x}\two(\phi) &= 1+ \rho\two _{x,2}e^{i \phi} +\rho\two_{x,-2}e^{-i \phi} \;. \\
\mathbf{B}_v \one &= \mathbf{I} + \rho\one_{v,-2}\mathbf{P}_- + \rho\one_{v,2}\mathbf{P}_+ \;;&
\mu_{v}\one(\phi) &= 1+ \rho\one _{v,2}e^{i \phi} +\rho\one_{v,-2}e^{-i \phi}\;.  \\
\mathbf{B}_v \two &= \mathbf{I} + \rho\two_{v,-2}\mathbf{P}_- + \rho\two_{v,2}\mathbf{P}_+ \;;&
\mu_{v}\two(\phi) &= 1+ \rho\two _{v,2}e^{i \phi} +\rho\two_{v,-2}e^{-i \phi}\;.
\end{array}
\label{matrices_NNN}
\end{equation}

The following proposition is derived in the same way as the analogous proposition in the previous Section.
\begin{prop} The eigenvalues $\nu$ and associated eigenvectors ${\bf u}_\nu(\phi_m)$ of $M$ (with periodic boundary conditions) \color{black} satisfy ${\bf u}_\nu(\phi_m)=\begin{pmatrix}\e_1\mathbf{v}_m, \e_2\mathbf{v}_m, \nu \e_1\mathbf{v}_m, \nu \e_2\mathbf{v}_m \end{pmatrix}^T\;.$
For each $m\in\{0,\cdots n-1\}$ given, there are four eigenpairs (counting multiplicity)
determined by solving the following equation for $\nu$ and $\epsilon_i$ (we dropped the argument $\phi$):
\begin{align*}
\begin{pmatrix}
g_x\one\mu\one_{x} + \nu g_v\one\mu\one_{v} - \nu^2 & g_x\one \lambda_{x}\one + \nu g_v\one \lambda_{v}\one \\
g_x\two \lambda_{x}\two + \nu g_v\two \lambda_{v}\two & g_x\two\mu\two_{x} + \nu g_v\two\mu\two_{v}-\nu^2
\end{pmatrix}
\begin{pmatrix}
\epsilon_1 \\ \epsilon_2
\end{pmatrix}
&=
\begin{pmatrix}
0 \\ 0
\end{pmatrix}
\;.
\end{align*}
\label{prop:charpoly2}
\end{prop}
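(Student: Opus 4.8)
The plan is to mimic the proof of Proposition~\ref{prop:charpoly}, since the system \eqref{linear_sys} has exactly the same block structure as \eqref{eq:linear_sys3}, only with two species instead of three and with the nearest-neighbor circulant blocks replaced by the next-nearest-neighbor circulant blocks listed in \eqref{matrices_NNN}. First I would observe that an eigenvector of $\mathbf{M}$ associated to the eigenvalue $\nu$ must satisfy $\dot{\mathbf z}=\nu\mathbf z$, which follows immediately from the top two block-rows of \eqref{linear_sys} (the $\mathbf I$ blocks). This collapses the $4n$-dimensional eigenproblem to a $2n$-dimensional one, namely $\begin{pmatrix}g_x\one\mathbf{B}\one_x+\nu g_v\one\mathbf{B}\one_v-\nu^2\mathbf I & g_x\one\mathbf{A}_x\one+\nu g_v\one\mathbf{A}_v\one\\ g_x\two\mathbf{A}_x\two+\nu g_v\two\mathbf{A}_v\two & g_x\two\mathbf{B}\two_x+\nu g_v\two\mathbf{B}\two_v-\nu^2\mathbf I\end{pmatrix}\binom{\mathbf z\one}{\mathbf z\two}=0$, obtained from the bottom two block-rows.

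Next I would use the fact, already recorded in the excerpt, that $\mathbf{A}$ and $\mathbf{B}$ are all circulant, hence simultaneously diagonalized by the Fourier basis $\{\mathbf v_m\}$ of Definition~\ref{def:evecs}, with the eigenvalues $\lambda\one_x(\phi_m),\lambda\two_x(\phi_m),\mu\one_x(\phi_m),\dots$ displayed in \eqref{matrices_NNN}. Therefore I would look for solutions of the reduced system of the form $\mathbf z\one=\e_1\mathbf v_m$, $\mathbf z\two=\e_2\mathbf v_m$ with $\e_1,\e_2\in\mathbb C$. Substituting and using $\mathbf{A}\mathbf v_m=\lambda(\phi_m)\mathbf v_m$, $\mathbf{B}\mathbf v_m=\mu(\phi_m)\mathbf v_m$, each $n\times n$ block becomes multiplication by the corresponding scalar, and the $2n$-dimensional system decouples into $n$ independent $2\times2$ systems, one for each $m$ — precisely the $2\times2$ matrix equation in the statement. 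Lifting back up via $\dot{\mathbf z}=\nu\mathbf z$ then gives the claimed form ${\bf u}_\nu(\phi_m)=(\e_1\mathbf v_m,\e_2\mathbf v_m,\nu\e_1\mathbf v_m,\nu\e_2\mathbf v_m)^T$. Counting: the determinant of the $2\times2$ matrix is a polynomial of degree $4$ in $\nu$ (each diagonal entry contributes a $-\nu^2$), so for each of the $n$ values of $m$ there are four eigenpairs, accounting for all $4n$ eigenvalues of $\mathbf M$.

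The only genuinely substantive point is completeness — that this construction produces all eigenvectors, not just some. This is handled exactly as in the earlier proposition: the vectors $\mathbf v_m$, $m\in\{0,\dots,n-1\}$, form a basis of $\mathbb C^n$, so any vector in $\mathbb C^{4n}$ decomposes along them, and since the blocks of $\mathbf M$ are all polynomials in $\mathbf P_\pm$ (equivalently, all circulant), $\mathbf M$ preserves each of the $n$ four-dimensional subspaces spanned by $\{(\mathbf v_m,0,0,0),(0,\mathbf v_m,0,0),(0,0,\mathbf v_m,0),(0,0,0,\mathbf v_m)\}$; diagonalizing $\mathbf M$ therefore reduces to diagonalizing its restriction to each such subspace, which is the $4\times4$ problem just described, and the reduction $\dot{\mathbf z}=\nu\mathbf z$ cuts that down to the stated $2\times2$ problem. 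I expect no real obstacle here beyond bookkeeping; the one thing to be careful about is the orientation of the permutation matrices — that $\mathbf P_+$ has eigenvalue $e^{i\phi_m}$ and $\mathbf P_-=\mathbf P_+^{-1}$ has eigenvalue $e^{-i\phi_m}$ on $\mathbf v_m$ — so that the signs in the exponents of \eqref{matrices_NNN} come out consistently, and to note that $\mathbf P_+^n=\mathbf I$ is what makes the periodic boundary conditions legitimate. Since the excerpt explicitly says this proposition "is derived in the same way as the analogous proposition in the previous Section," I would keep the write-up short and simply point to the proof of Proposition~\ref{prop:charpoly}.
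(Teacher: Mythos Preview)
Your proposal is correct and follows essentially the same approach as the paper, which in fact gives no separate proof for this proposition and simply states that it ``is derived in the same way as the analogous proposition in the previous Section.'' Your write-up is actually more detailed than the paper's treatment of either proposition (in particular your remarks on completeness via the invariant four-dimensional subspaces), so trimming it down to a pointer to Proposition~\ref{prop:charpoly}, as you suggest, would match the paper exactly.
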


\begin{lem} When $\phi=0$, the matrix of Proposition \ref{prop:charpoly2} has determinant
$Q(\nu,\phi=0)$ equal to
\begin{equation*}
\nu^2\,\left[\nu^2+\nu \left(g_v^{(1)}\alpha_{v,1}^{(1)}+g_v^{(2)}\alpha_{v,1}^{(2)}\right)+
 \left(g_x^{(1)}\alpha_{x,1}^{(1)}+g_x^{(2)}\alpha_{x,1}^{(2)}\right)\right]\;.
\end{equation*}
The expression of the constant term of $Q(\nu,\phi)$ is $a_0(\phi)$, where $a_0(\phi)=g\one_x g\two_x \left(\mu\one_{x}(\phi)  \mu\two_{x}(\phi) - \lambda\one_x (\phi)
\lambda\two_{x}(\phi)\right)$.
\label{lem:bookkeeping2}
\end{lem}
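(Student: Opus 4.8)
The plan is to write the determinant of Proposition~\ref{prop:charpoly2} out explicitly as a polynomial in $\nu$ with $\phi$-dependent coefficients, and then specialize. Expanding the $2\times2$ determinant gives
\begin{align*}
Q(\nu,\phi)=\bigl(g_x\one\mu\one_{x}+\nu g_v\one\mu\one_{v}-\nu^2\bigr)\bigl(g_x\two\mu\two_{x}+\nu g_v\two\mu\two_{v}-\nu^2\bigr)-\bigl(g_x\one\lambda\one_x+\nu g_v\one\lambda\one_v\bigr)\bigl(g_x\two\lambda\two_x+\nu g_v\two\lambda\two_v\bigr),
\end{align*}
where the $\lambda$'s and $\mu$'s are the circulant eigenvalues listed in \eqref{matrices_NNN}. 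The assertion about the constant term is then immediate: setting $\nu=0$ kills every term carrying a factor of $\nu$, leaving $Q(0,\phi)=g_x\one g_x\two\bigl(\mu\one_{x}(\phi)\mu\two_{x}(\phi)-\lambda\one_x(\phi)\lambda\two_x(\phi)\bigr)=a_0(\phi)$.

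For the $\phi=0$ formula I would first read off from \eqref{matrices_NNN} that $\lambda_x^{(i)}(0)=\rho^{(i)}_{x,1}+\rho^{(i)}_{x,-1}=\alpha^{(i)}_{x,1}$, and likewise $\lambda_v^{(i)}(0)=\alpha^{(i)}_{v,1}$, while $\mu_x^{(i)}(0)=1+\rho^{(i)}_{x,2}+\rho^{(i)}_{x,-2}=1+\alpha^{(i)}_{x,2}$ and $\mu_v^{(i)}(0)=1+\alpha^{(i)}_{v,2}$. The one place where the structure of the model is actually used is here: the decentralization constraint \eqref{eq:constraint_minus_one} says $\alpha^{(i)}_{x,1}+\alpha^{(i)}_{x,2}=-1$ and $\alpha^{(i)}_{v,1}+\alpha^{(i)}_{v,2}=-1$, so $\mu_x^{(i)}(0)=-\alpha^{(i)}_{x,1}$ and $\mu_v^{(i)}(0)=-\alpha^{(i)}_{v,1}$.

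Substituting and abbreviating $P_i(\nu)\equiv g_x^{(i)}\alpha^{(i)}_{x,1}+\nu\,g_v^{(i)}\alpha^{(i)}_{v,1}$, the $i$th diagonal entry of the matrix of Proposition~\ref{prop:charpoly2} at $\phi=0$ is $-P_i(\nu)-\nu^2$ and the product of the two off-diagonal entries is $P_1(\nu)P_2(\nu)$; hence
\begin{align*}
Q(\nu,0)=\bigl(P_1(\nu)+\nu^2\bigr)\bigl(P_2(\nu)+\nu^2\bigr)-P_1(\nu)P_2(\nu)=\nu^4+\nu^2\bigl(P_1(\nu)+P_2(\nu)\bigr)=\nu^2\Bigl(\nu^2+P_1(\nu)+P_2(\nu)\Bigr),
\end{align*}
and since $P_1(\nu)+P_2(\nu)=\bigl(g_x^{(1)}\alpha^{(1)}_{x,1}+g_x^{(2)}\alpha^{(2)}_{x,1}\bigr)+\nu\bigl(g_v^{(1)}\alpha^{(1)}_{v,1}+g_v^{(2)}\alpha^{(2)}_{v,1}\bigr)$, this is exactly the claimed expression.

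There is no genuine obstacle here: this is the $N=2$, next-nearest-neighbour analogue of Lemma~\ref{lem:bookkeeping} and the whole computation fits on a line. The only thing to watch is bookkeeping — one must apply \eqref{eq:constraint_minus_one} to rewrite the $\mu^{(i)}(0)$ in terms of $\alpha^{(i)}_{\cdot,1}$ \emph{before} collecting powers of $\nu$; otherwise the cross terms $P_1(\nu)P_2(\nu)$ coming from the two products do not visibly cancel and the overall factor $\nu^2$ — which is precisely what forces the double eigenvalue at $0$ required by Definition~\ref{def_stability} — stays hidden.
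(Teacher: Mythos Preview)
Your proof is correct and follows essentially the same route as the paper's: expand the $2\times 2$ determinant, read off the constant term at $\nu=0$, and at $\phi=0$ use the constraint \eqref{eq:constraint_minus_one} to identify $\mu_r^{(i)}(0)=-\alpha^{(i)}_{r,1}=-\lambda_r^{(i)}(0)$ before simplifying. Your abbreviation $P_i(\nu)$ makes the cancellation of the cross terms and the emergence of the factor $\nu^2$ more transparent than the paper's ``some algebra,'' but the argument is the same.
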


\begin{proof}
The full determinant of the matrix in Proposition \ref{prop:charpoly2} is equal to
\begin{equation*}
\begin{array}{rl}
& g\one_x g\two_x \left(\mu\one_{x}  \mu\two_{x} - \lambda\one_x \lambda\two_{x}\right) + \nu \left(g\one_x g\two_v \left(\mu\one_{x} \mu\two _{v} - \lambda\one_x\lambda\two_{v}\right)
+g\one_v g\two_x \left( \mu\one_{v} \mu\two_{x} - \lambda\one_v\lambda\two_{x}\right)\right) \\
+\nu^2& \left(-g\one_x\mu\one_{x}-g\two_x\mu\two_{x}+g\one_v g\two_v\left(\mu\one_{v} \mu\two_{v}- \lambda\two_v\lambda\two_{v}\right) \right)+\nu^3 \left(-g\one_v\mu\one_{v}-g\two_v\mu\two_{v}\right)+\nu^4 
\label{fourth_degree_general}
\end{array} \quad .
\end{equation*}
Now set $\phi=0$. From \eqref{matrices_NNN} and recalling Definition \ref{def:alphas-betas}, we see
that for $r\in\{x,v\}$ and $i\in\{1,2\}$:
\begin{equation*}
\mu_r^{(i)}(0)=1+\alpha_{r,2}^{(i)} \quad \textrm{and} \quad
 \lambda_r^{(i)}(0)=\alpha_{r,1}^{(i)}\;.
\end{equation*}
Note that the constraint \eqref{eq:constraint_minus_one} gives for $r\in\{x,v\}$, $ 1+\alpha_{r,1}^{(i)}+\alpha_{r,2}^{(i)}=0\quad \Longrightarrow \quad -\mu_r^{(i)}(0)=\lambda_r^{(i)}(0)=\alpha_{r,1}^{(i)}$
Substituting this, and some algebra, yields the Lemma.
\end{proof}

\begin{thm} If any of the following conditions are satisfied, then for large $N$, the system given by
\eqref{NN_diatomic_sys} with periodic boundary conditions is not (linearly) stable.

(i) $g_x\one\neq 0$  $g_x\two\neq 0$.

(ii) $g_x^{(1)}\alpha_{x,1}^{(1)}+g_x^{(2)}\alpha_{x,1}^{(2)}\leq 0$ or
$g_v^{(1)}\alpha_{v,1}^{(1)}+g_v^{(2)}\alpha_{v,1}^{(2)}\leq 0$.

(iii) $g\one_x g\two_x \left[\alpha\two_{x,1}\left(\beta\one_{x,1} + 2\beta\one_{x,2}\right) +\alpha\one_{x,1}\left(\beta\two_{x,1} + 2\beta\two_{x,2}\right)\right] \neq 0 $.
\label{NN_diatomic_theorem}
\end{thm}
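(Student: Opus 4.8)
The plan is to follow exactly the template used in the proof of Theorem \ref{theorem_triatomic}, transferred to the $2$-type next-nearest-neighbor setting using Lemma \ref{lem:bookkeeping2} and Proposition \ref{prop:charpoly2}. Part (i) should again be the ``degenerate mass'' case: if $g_x\one=0$ (or $g_x\two=0$), then inspecting the matrix of Proposition \ref{prop:charpoly2} one sees the first (resp. second) row acquires a factor $\nu$, so $\det$ does too; since this holds for every $\phi_m$, the eigenvalue $0$ picks up multiplicity at least $2n=N$, contradicting Definition \ref{def_stability}. (Note the statement of (i) as printed reads $g_x\one\ne 0$, $g_x\two\ne 0$ — presumably a typo for $g_x\one=0$ or $g_x\two=0$, matching part (i) of Theorem \ref{theorem_triatomic}; I would phrase the proof for the vanishing case.)

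For part (ii), I would read off from the first part of Lemma \ref{lem:bookkeeping2} that at $\phi=0$ the six-degree-in-the-triatomic-case polynomial is replaced by $Q(\nu,0)=\nu^2\big[\nu^2+\nu(g_v\one\alpha_{v,1}\one+g_v\two\alpha_{v,1}\two)+(g_x\one\alpha_{x,1}\one+g_x\two\alpha_{x,1}\two)\big]$. The bracketed quadratic has roots with negative real part iff both coefficients $g_v\one\alpha_{v,1}\one+g_v\two\alpha_{v,1}\two$ and $g_x\one\alpha_{x,1}\one+g_x\two\alpha_{x,1}\two$ are strictly positive (Routh--Hurwitz for a monic quadratic). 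Hence if either quantity is $\le 0$, the $\phi=0$ block has a root with nonnegative real part in addition to the double root at $0$; if it is $<0$ there is a genuinely positive real root, and if it is $=0$ one gets extra multiplicity at $0$ — either way Definition \ref{def_stability} fails for $n$ large. I would phrase this so that the $\le 0$ case yields ``not stable'' in the marginal sense, exactly as the remark after Theorem \ref{theorem_triatomic} does.

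Part (iii) is the substantive one and follows the same mechanism as Theorem \ref{theorem_triatomic}(iii): write $Q(\nu,\phi)=\sum_{i=2}^4 a_i(\phi)\nu^i+2a_1(\phi)\nu+a_0(\phi)$, with the $a_i(0)$ given by Lemma \ref{lem:bookkeeping2}, so $a_0(0)=a_1(0)=0$ automatically (the bracket in Lemma \ref{lem:bookkeeping2} is $\nu^2[\cdots]$). The relevant quantity is $a_0'(0)=g\one_x g\two_x\frac{\partial}{\partial\phi}\big(\mu\one_x\mu\two_x-\lambda\one_x\lambda\two_x\big)\big|_{\phi=0}$. Invoking Proposition \ref{prop:appendix} (the $a_0(0)=a_1(0)=0$, $a_2(0)\ne0$, $a_0'(0)\ne0$ instability criterion), the system is unstable for large $n$ whenever $a_0'(0)\ne 0$. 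The remaining work is the bookkeeping: differentiate $\mu_r^{(i)}(\phi)=1+\rho^{(i)}_{r,2}e^{i\phi}+\rho^{(i)}_{r,-2}e^{-i\phi}$ and $\lambda_r^{(i)}(\phi)$ from \eqref{matrices_NNN} at $\phi=0$, getting $\mu_r^{(i)\prime}(0)=i\beta^{(i)}_{r,2}$ and $\lambda^{(i)\prime}_x(0)=i\,(\rho^{(i)}_{x,1}-\rho^{(i)}_{x,-1})$-type expressions (with the $\mathbf P_+$ vs $\mathbf P_-$ sign bookkeeping for $i=1$ versus $i=2$), then apply the product rule to $\mu\one_x\mu\two_x-\lambda\one_x\lambda\two_x$ at $\phi=0$ using $\mu_x^{(i)}(0)=1+\alpha^{(i)}_{x,2}=-\alpha^{(i)}_{x,1}$ and $\lambda_x^{(i)}(0)=\alpha^{(i)}_{x,1}$ from the proof of Lemma \ref{lem:bookkeeping2}. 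Collecting terms and dividing out the common factor $i$ should produce precisely $g\one_x g\two_x\big[\alpha\two_{x,1}(\beta\one_{x,1}+2\beta\one_{x,2})+\alpha\one_{x,1}(\beta\two_{x,1}+2\beta\two_{x,2})\big]$; I expect the factor $2$ on the $\beta_{x,2}$ terms to come from the $e^{i\phi}+e^{-i\phi}$ symmetry being broken by differentiation ($\partial_\phi(e^{i\phi}+e^{-i\phi})|_0=0$ but here the $\mathbf P_+$ and $\mathbf P_-$ contributions enter $\mu$ and $\lambda$ asymmetrically). The main obstacle is thus not conceptual but the sign/index discipline in this derivative computation — getting the $i=1$ ($\mathbf P_-$ in $\mathbf A_x\one$, $\mathbf P_\pm$ in $\mathbf B_x\one$) versus $i=2$ ($\mathbf P_+$ in $\mathbf A_x\two$) conventions consistent with \eqref{matrices_NNN} so that the cross terms assemble into the stated symmetric combination. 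Finally, as in the triatomic case, I would substitute $\rho^{(i)}_{x,\pm1},\rho^{(i)}_{x,\pm2}$ in terms of $\alpha,\beta$ using Definition \ref{def:alphas-betas} and the constraints \eqref{eq:constraint_minus_one} to present the condition in the form displayed in (iii), and note it is exactly the vanishing locus that separates stable from unstable — the announced codimension-one phenomenon.
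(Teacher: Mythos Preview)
Your proposal is correct and follows essentially the same approach as the paper's own proof, which is itself just a two-sentence pointer back to Theorem \ref{theorem_triatomic}: the paper says part (ii) is derived by explicitly solving the quadratic factor of $Q(\nu,0)$ from Lemma \ref{lem:bookkeeping2}, and part (iii) by differentiating $a_0(\phi)=g_x\one g_x\two(\mu_x\one\mu_x\two-\lambda_x\one\lambda_x\two)$ at $\phi=0$ and expressing the result in the $\alpha$'s and $\beta$'s---exactly what you outline. Your observation that condition (i) as printed must be a typo for ``$g_x\one=0$ or $g_x\two=0$'' is also correct; your handling of it mirrors the paper's treatment of the triatomic case.
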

\color{black}

\begin{proof} This proof is very similar to that of Theorem
\ref{theorem_triatomic}. Part (ii) is now more easily derived by explicitly solving for
the roots of $Q(\nu,\phi)$ when $\phi=0$ (see Lemma \ref{lem:bookkeeping2}).
In (iii), it is best to differentiate the formula
in the second part of Lemma \ref{lem:bookkeeping2} directly. The derivatives of
the $\lambda$'s and $\mu$'s are easily expressed directly in the $\alpha$'s and $\beta$'s.
\end{proof}

The conjectures of \cite{CantosTrans} state that instability in the system
with periodic boundary imply instability or flock instability of the system on the line
(i.e. with non-trivial boundary conditions). That gives us the following corollary.
\color{black}

\begin{cor} Using the conjectures of \cite{CantosTrans}, we obtain that if 
\begin{align*}
\left[-g\one_x\mu\one_{x}-g\two_x\mu\two_{x}+g\one_v g\two_v(\mu\one_{v} \mu\two_{v}- \lambda\two_v\lambda\two_{v})\right]_{\phi=0}\neq 0
\end{align*}
 and
\begin{equation*}
 \displaystyle g\one_x g\two_x \,\left[\alpha\two_{x,1}\left(\beta\one_{x,1} + 2\beta\one_{x,2}\right) + \alpha\one_{x,1}\left(\beta\two_{x,1} + 2\beta\two_{x,2}\right)\right] \ne 0\;,
\end{equation*}
\noindent
then the system on the line given by (\ref{NN_diatomic_sys})
has some form of instability (Definitions \ref{def_stability} or \ref{def_flock-stability}).
\label{cor:diatomic}
\end{cor}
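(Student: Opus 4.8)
The plan is to deduce the corollary from Theorem \ref{NN_diatomic_theorem}, Proposition \ref{prop:appendix}, and the conjectures of \cite{CantosTrans}, in exact analogy with the passage from Theorem \ref{theorem_triatomic} to Corollary \ref{cor:triatomic}. First I would write the characteristic polynomial of $\mathbf{M}$ (the matrix \eqref{linear_sys} with periodic boundary conditions) as $Q(\nu,\phi)=\nu^{4}+a_{3}(\phi)\nu^{3}+a_{2}(\phi)\nu^{2}+2a_{1}(\phi)\nu+a_{0}(\phi)$, reading the coefficients off the full determinant displayed in the proof of Lemma \ref{lem:bookkeeping2}. The structural fact from that lemma is that $Q(\nu,0)=\nu^{2}\bigl[\nu^{2}+(g_v^{(1)}\alpha_{v,1}^{(1)}+g_v^{(2)}\alpha_{v,1}^{(2)})\nu+(g_x^{(1)}\alpha_{x,1}^{(1)}+g_x^{(2)}\alpha_{x,1}^{(2)})\bigr]$, so $a_{0}(0)=a_{1}(0)=0$ with no hypothesis needed: at $\phi=0$ the polynomial already carries a double root at $\nu=0$.

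Next I would match the two hypotheses of the corollary with the remaining input of Proposition \ref{prop:appendix}. The first hypothesis is exactly ``$a_{2}(0)\neq0$'': it is the $\nu^{2}$-coefficient of the determinant displayed in Lemma \ref{lem:bookkeeping2}, and the evaluations $\mu_{r}^{(i)}(0)=-\alpha_{r,1}^{(i)}$, $\lambda_{r}^{(i)}(0)=\alpha_{r,1}^{(i)}$ (from \eqref{matrices_NNN} and the constraint \eqref{eq:constraint_minus_one}) show it equals $g_x^{(1)}\alpha_{x,1}^{(1)}+g_x^{(2)}\alpha_{x,1}^{(2)}$, the coefficient appearing in the factorization above. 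The second hypothesis is verbatim condition (iii) of Theorem \ref{NN_diatomic_theorem}, which I would re-read as ``$a_{0}'(0)\neq0$'': differentiating $a_{0}(\phi)=g\one_x g\two_x\bigl(\mu\one_{x}(\phi)\mu\two_{x}(\phi)-\lambda\one_{x}(\phi)\lambda\two_{x}(\phi)\bigr)$ at $\phi=0$, using $\tfrac{d}{d\phi}\mu_{x}^{(i)}\big|_{0}=i\beta_{x,2}^{(i)}$, $\tfrac{d}{d\phi}\lambda_{x}\one\big|_{0}=-i\rho\one_{x,-1}$, $\tfrac{d}{d\phi}\lambda_{x}\two\big|_{0}=i\rho\two_{x,1}$, and $\rho^{(i)}_{x,\pm1}=\tfrac12(\alpha^{(i)}_{x,1}\pm\beta^{(i)}_{x,1})$ (Definition \ref{def:alphas-betas}), a short computation gives $a_{0}'(0)=-\tfrac{i}{2}\,g\one_x g\two_x\bigl[\alpha\two_{x,1}(\beta\one_{x,1}+2\beta\one_{x,2})+\alpha\one_{x,1}(\beta\two_{x,1}+2\beta\two_{x,2})\bigr]$, so that $a_{0}'(0)\neq0$ is precisely the second hypothesis. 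This is where the nonlinear correction of the abstract enters, through the coupling of the antisymmetric parts $\beta$ of the two agent types via the factors $\alpha\one_{x,1},\alpha\two_{x,1}$.

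With $a_{0}(0)=a_{1}(0)=0$, $a_{2}(0)\neq0$, and $a_{0}'(0)\neq0$ established, Proposition \ref{prop:appendix} applies: for $N=2n$ large enough there is an index $m$ with $Q(\nu,\phi_{m})$ having a root of strictly positive real part, i.e.\ $\mathbf{M}$ with periodic boundary conditions has an eigenvalue in the open right half-plane, so the periodic system \eqref{NN_diatomic_sys} is linearly unstable (Definition \ref{def_stability}) for large $N$. The conjectures of \cite{CantosTrans} then transfer this to the line: instability of the periodic system for large $N$ forces the system on the line, under either admissible Laplacian-type boundary condition, to be unstable in the sense of Definition \ref{def_stability} or flock unstable in the sense of Definition \ref{def_flock-stability} --- which is the claim.

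I do not expect a genuine obstacle: the statement is ``bookkeeping plus citation.'' The two points deserving care are, first, the identification of the first hypothesis with $a_{2}(0)\neq0$, which is exactly the nondegeneracy that lets Proposition \ref{prop:appendix} detect a positive-real-part eigenvalue as $\phi$ moves off $0$ (if instead $a_{2}(0)=0$ the unperturbed polynomial acquires a triple root at $\nu=0$ and one only obtains marginal stability, not the positive-real-part eigenvalue the conjectures require); and second, the conclusion is conditional on the still-open conjectures of \cite{CantosTrans}, so it cannot be made unconditional here. The forthcoming strengthening of Proposition \ref{prop:appendix} in \cite{LyonsVeerman} is what would eventually permit dropping the first hypothesis, leaving only the nonlinear-correction condition.
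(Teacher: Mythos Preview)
Your proposal is correct and follows essentially the same route as the paper: identify the two hypotheses of the corollary with the conditions $a_2(0)\neq 0$ and $a_0'(0)\neq 0$ of Proposition \ref{prop:appendix} (the latter being condition (iii) of Theorem \ref{NN_diatomic_theorem}), obtain a right-half-plane eigenvalue for the periodic system, and then invoke the conjectures of \cite{CantosTrans}. The paper treats this corollary in one sentence, while you supply the explicit verification that $a_2(0)$ reduces to $g_x^{(1)}\alpha_{x,1}^{(1)}+g_x^{(2)}\alpha_{x,1}^{(2)}$ and the derivative computation for $a_0'(0)$; these details are accurate and your closing remarks on the role of the $a_2(0)\neq 0$ hypothesis and its eventual removal via \cite{LyonsVeerman} match the paper exactly.
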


\noindent
As in Corollary \ref{cor:triatomic}, \color{black} the first condition, which corresponds to $a_2(0)\neq 0$ in Proposition \ref{prop:appendix},
can be omitted.
\color{black}

\begin{figure}[pbth]
\begin{center}
\begin{subfigure}{.32\textwidth}
\centering
\includegraphics[width=6.5cm,height=7.5cm]{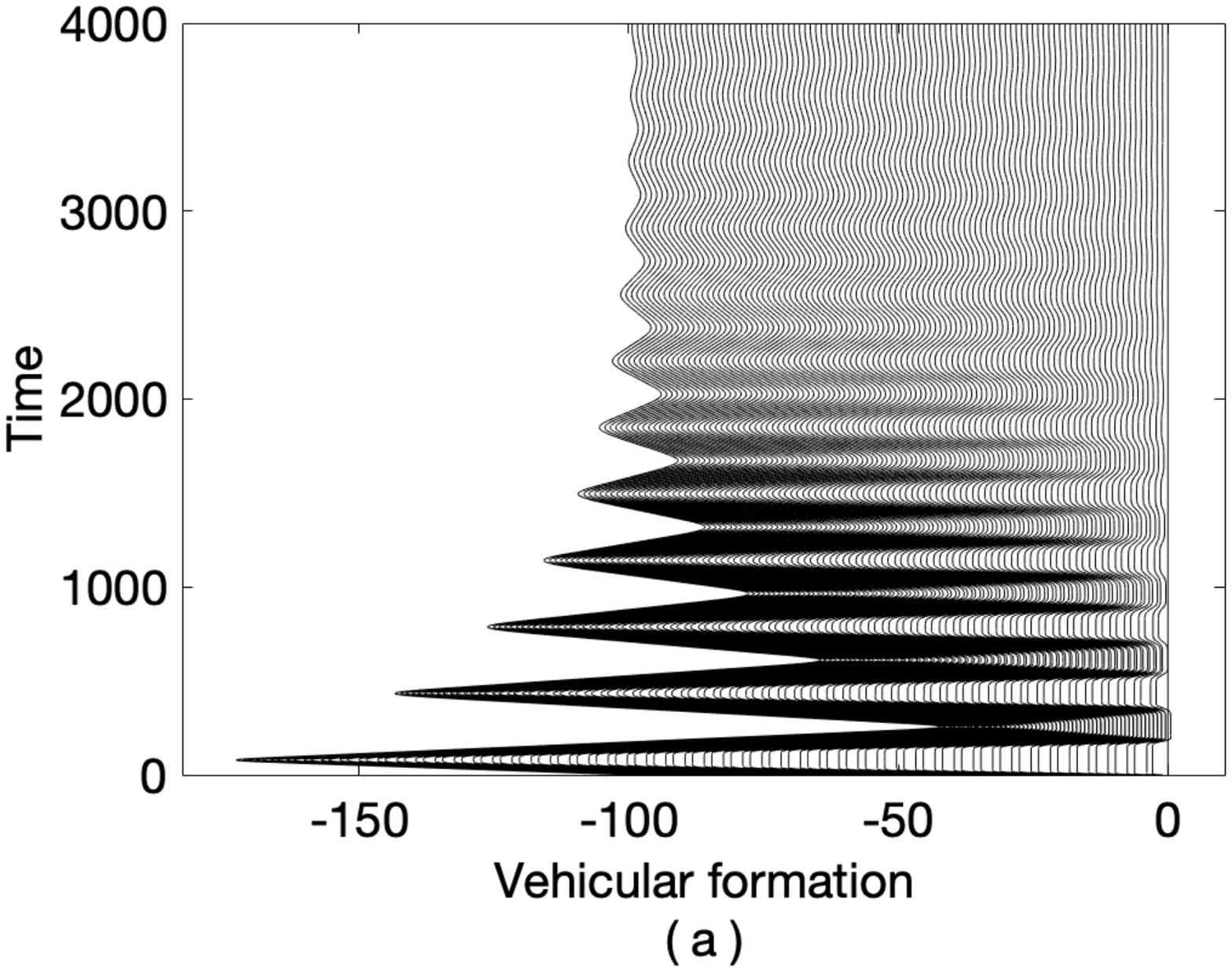}
\end{subfigure}%
\begin{subfigure}{.32\textwidth}
\centering
\includegraphics[width=6.5cm,height=7.5cm]{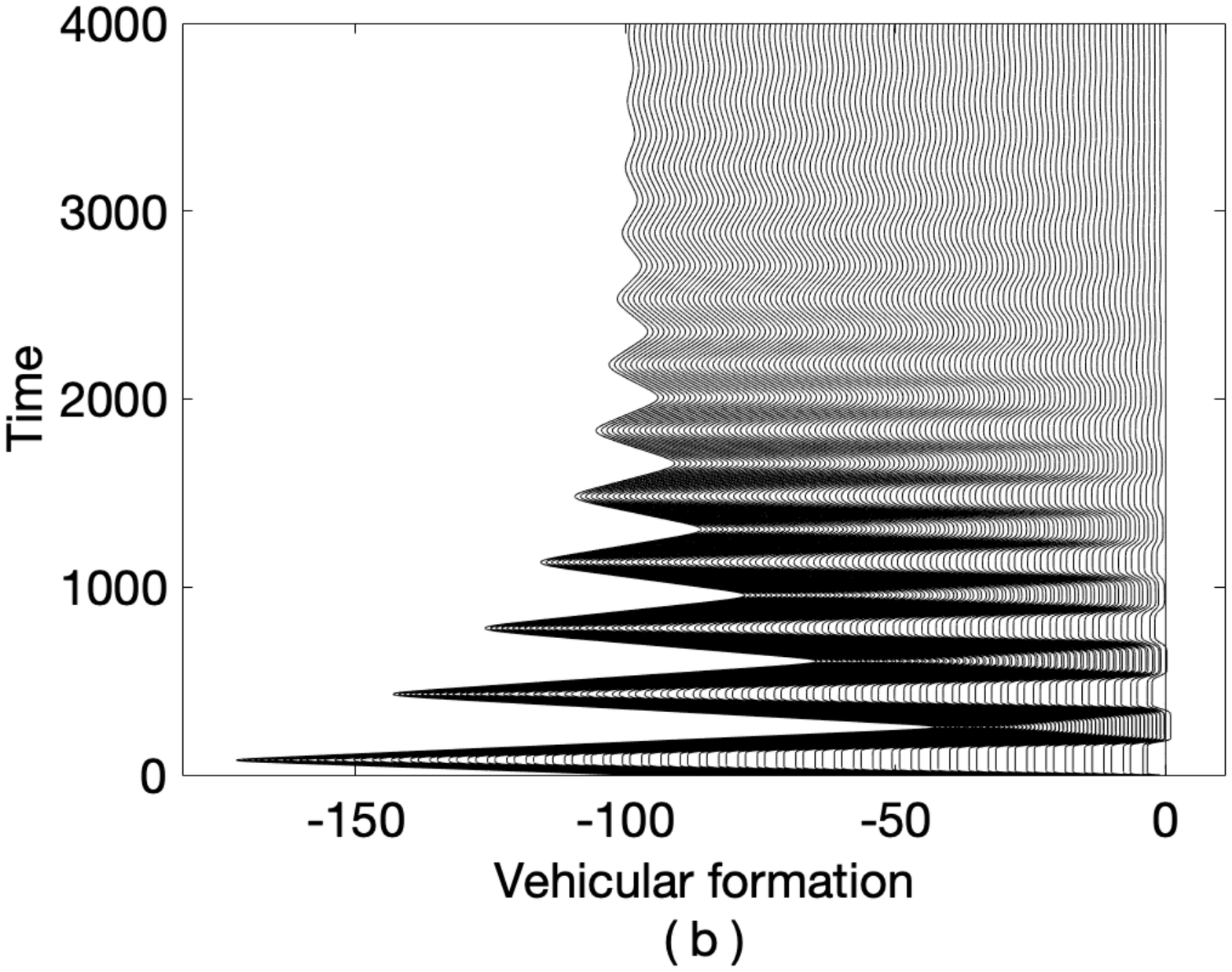}
\end{subfigure}
\begin{subfigure}{.32\textwidth}
\centering
\includegraphics[width=6.5cm,height=7.5cm]{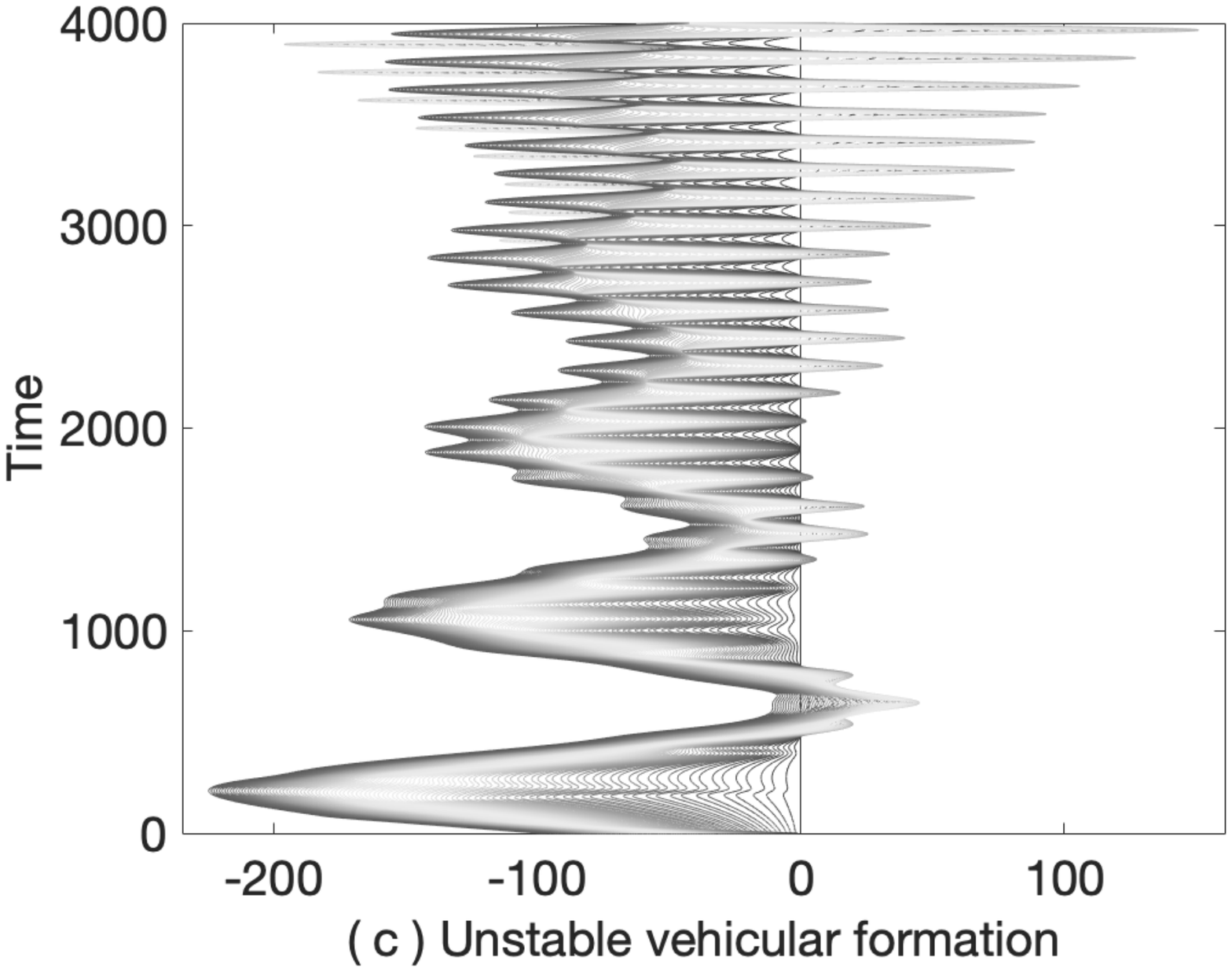}
\end{subfigure}
\vspace{-3\baselineskip}
\caption{\emph{Behavior of vehicular formations with $N=100$, $\boldsymbol{g} = -(1,1,1,1)$, $\boldsymbol{\rho_x} = -\frac{1}{60}( 5,15,20,20,27,9,12,12)$, and $\boldsymbol{\rho_v} = -(0.30, 0.70, 0, 0, 0.30, 0.70, 0, 0)$.} (a) \emph{Boundary Condition Type I. Maximum amplitude of $-72.8$ at $t=79.3$.} (b) \emph{Boundary Condition Type II. Maximum amplitude of $-72.0$ at $t=78.5$.} (c) \emph{Dynamics of an unstable system. All parameters are the same as in (a) \emph{except}  $\boldsymbol{\rho_x} = -(0.30,0.25,0.25,0.20, 0.30, 0.55, 0.10)$.}}
\label{fig:picNNN2}
\end{center}
\end{figure}

In order to do the simulations, we define two types of boundary conditions. \color{black}
In Type I boundary
conditions, the central coefficients $\rho^{(i)}_{x,0} $, and $\rho^{(i)}_{v,0}$ are adjusted.
\begin{align*}
\begin{split}
\ddot{z}\one_1 & =0\\
\ddot{z}\one_n & = g\one_x\left(-(\rho\one_{x,1}+\rho\one_{x,-1}+\rho\one_{x,-2})z\one_n+\rho\one_{x,1}z\two_n+\rho\one_{x,-1}z\two_{n-1}+\rho\one_{x,-2}z\one_{n-1}\right)\\
& ~~~+ g\one_v\left(-(\rho\one_{v,1}+\rho\one_{v,-1}+\rho\one_{v,-2})\dot{z}\one_n+\rho\one_{v,1}\dot{z}\two_n+\rho\one_{v,-1}\dot{z}\two_{n-1}+\rho\one_{v,-2}\dot{z}\one_{n-1}\right)\\
\ddot{z}\two_1 & = g\two_x\left(-(\rho\two_{x,-1}+\rho\two_{x,1}+\rho\two_{x,2})z\two_1+\rho\two_{x,-1}z\one_1+\rho\two_{x,1}z\one_{2}+\rho\two_{x,2}z\two_{2}\right) \\
&~~~ +g\two_v\left(-(\rho\two_{v,-1}+\rho\two_{v,1}+\rho\two_{v,2})\dot{z}\two_1+\rho\two_{v,-1}\dot{z}\one_1+\rho\two_{v,1}\dot{z}\one_{2}+\rho\two_{v,2}\dot{z}\two_{2}\right)\\
\ddot{z}\two_n & = g\two_x\left(-(\rho\two_{x,-1}+\rho\two_{x,-2})z\two_n+\rho\two_{x,-1}z\one_n+\rho\two_{x,-2}z\two_{n-1}\right) \\
&~~~ +g\two_v\left(-(\rho\two_{v,-1}+\rho\two_{v,-2})\dot{z}\two_n+\rho\two_{v,-1}\dot{z}\one_n+\rho\two_{v,-2}\dot{z}\two_{n-1}\right)
\end{split} \quad .
\end{align*}

For Type II BC, we keep the central coefficients $\rho^{(i)}_{x,0} $, and $\rho^{(i)}_{v,0}$ equal to 1 and we adjust the remaining coefficients accordingly such that the sum of coefficients is zero as follows:
\begin{align*}
\begin{split}
\ddot{z}\one_1 & =0\\
\ddot{z}\one_n & = g\one_x\left(z\one_n+\rho\one_{x,1}z\two_n+\rho\one_{x,-1}z\two_{n-1}-(1+\rho\one_{x,1}+\rho\one_{x,-1})z\one_{n-1}\right)\\
& ~~~+ g\one_v\left(\dot{z}\one_n+\rho\one_{v,1}\dot{z}\two_n+\rho\one_{v,-1}\dot{z}\two_{n-1}-(1+\rho\one_{z,1}+\rho\one_{v,-1})\dot{z}\one_{n-1}\right) \\
\ddot{z}\two_1 & = g\two_x\left(z\two_1+\rho\two_{x,-1}z\one_1+\rho\two_{x,1}z\one_{2}-(1+\rho\two_{x,1}+\rho\two_{x,-1})z\two_{2}\right) \\
&~~~ +g\two_v\left(\dot{z}\two_1+\rho\two_{v,-1}\dot{z}\one_1+\rho\two_{v,1}\dot{z}\one_{2}-(1+\rho\two_{v,1}+\rho\two_{v,-1})\dot{z}\two_{2}\right) \\
\ddot{z}\two_n & = g\two_x\left(z\two_n+(\rho\two_{x,1}+\rho\two_{x,-1})z\one_n+(\rho\two_{x,2}+\rho\two_{x,-2})z\two_{n-1}\right) \\
&~~~ +g\two_v\left(\dot{z}\two_n+(\rho\two_{v,1}+\rho\two_{v,-1})\dot{z}\one_n+(\rho\two_{v,2}+\rho\two_{v,-2})\dot{z}\two_{n-1}\right)
\end{split}
\end{align*}

We run simulations of the system in $\mathbb{R}$ considering these two boundary conditions with
initial condition:
\begin{align*}
z^{(i)}_k(0) = \dot z^{(i)}_k(0) = 0 \quad \textrm{\underline{except}} \quad
\dot{z}^{(1)}_1(0) = 1\;.
\end{align*}

As in the previous chapter, we can shorten the notation by writing the coefficients of the system as $\boldsymbol{g} = (g\one_x, g\two_x, g\two_x, g\two_v)$, $\boldsymbol{\rho_x} = (\rho\one_{x,1},\rho\one_{x,-1},\rho\one_{x,2},\rho\one_{x,-2},\rho\two_{x,1},\rho\two_{x,-1},\rho\two_{x,2},\rho\two_{x,-2}) $, and similarly $\boldsymbol{\rho_v}$.
Figures \ref{fig:picNNN2}(a) and \ref{fig:picNNN2}(b) show the dynamics of a system of 100 vehicles in formation with next nearest neighbor interactions and boundary conditions Type I and Type II respectively. The parameters were chosen to satisfy Theorem \ref{NN_diatomic_theorem}.
On the other hand, Figure \ref{fig:picNNN2}(c) shows the dynamics of a system
in $\mathbb{R}$ with an evident instability of some type, see parameters in the caption of Figure \ref{fig:picNNN2}(c). 
These were chosen to satisfy $\sum_{i\in\{1,2\}}\,\beta_{x,1}^{(i)} + 2\beta_{x,2}^{(i)}=0$,
but not the condition of Corollary \ref{cor:diatomic}.

\section{Conclusion}

We consider systems of the form (\ref{eq:laplacian-system}) where a) the Laplacians $L_x$ and $L_v$ do not
necessarily commute and so cannot be simultaneously diagonalized and b) these Laplacians are not
necessarily symmetric. Such systems \emph{cannot} successfully be analyzed by methods used in earlier papers:
Laplace or Fourier transforms, analysis of the eigenvalues and eigenvectors of either ${\bf M}$ or
its constituent Laplacians. Instead, we follow the analysis proposed in \cite{cantos2016signal, CantosTrans}
to analyze these systems.

Our aim with this work is to find conditions for stability for systems in which the agents are \emph{not}
identical. Because this is analytically a very difficult problem, we start in this paper with
periodic arrangements of 3 types of agents ($\cdots 3-2-1-3-2-1$) with nearest neighbor interactions and
periodic arrangements of 2 types of agents ($\cdots 2-1-2-1$)
with \emph{next} nearest neighbor interactions. For these types of flocks, we develop \emph{necessary}
conditions for stability. Corollaries \ref{cor:diatomic} and \ref{cor:triatomic} show that in each of these
two cases, a necessary condition for stability is that $\sum_{j\neq 0} \rho_{x,j}j$
\emph{plus a nonlinear correction}. Thus \emph{stability is a co-dimension one phenomenon}.

We close with a few remarks about these results. The first is that in this context, instability
refers to two phenomena. One is instability in the usual sense of the word (as spelled out in
Definition \ref{def_stability}), namely an eigenvalue has positive real part. The other notion
of instability is given in Definition \ref{def_flock-stability}. This notion essentially means
that transients increase exponentially fast as the number of agents $n$ increases, even though
for each $n$ the system is stable in the sense of Definition \ref{def_stability}.

The second remark is that certainly the necessary condition derived here is not sufficient.
For example, if we give the last agent an infinite mass (setting $g_x=g_v=0$ for this agent), it cannot
change its velocity. Clearly, if the leader changes its velocity, a system with that boundary condition
cannot evolve towards equilibrium.

\color{black}

\section{Appendix}

\noindent
\begin{prop} For $n\geq 2$, define $Q_n(z)=\sum_{i=2}^n\,a_i(t)z^i +2a_1(t)z+a_0(t)$
where the $a_i$ are analytic functions on $\mathbb{R}$ modulo $2\pi$ into $\mathbb{C}$. Assume further that $~a_0(0)=a_1(0)=0 \quad \textrm{and} \quad a_2(0)\neq 0 \quad \textrm{and} \quad a_0'(0)\neq 0$.
Then there is a neighborhood $N$ of the origin and an $\epsilon>0$ in which the zeros of
$\{Q_n(t)\}_{t\in (-\epsilon,\epsilon)}$ form two differentiable curves intersecting orthogonally
at the origin.
\label{prop:appendix}
\end{prop}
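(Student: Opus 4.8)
The plan is to treat $Q_n$ as an analytic function of the \emph{pair} $(z,t)$, solve for $t$ as a function of $z$, and then read off the geometry of the real locus. Since each $a_i$ is real-analytic on $\mathbb R/2\pi\mathbb Z$, it extends to an analytic function on a complex neighborhood of $0$, so $Q_n(z,t)$ is analytic in $(z,t)\in\mathbb C^2$ near $(0,0)$. Here $Q_n(0,0)=a_0(0)=0$ and $\partial_t Q_n(0,0)=a_0'(0)\neq 0$, so the analytic implicit function theorem produces a disc $N$ about $z=0$ and an analytic $T\colon N\to\mathbb C$ with $T(0)=0$ whose graph is exactly the zero set of $Q_n$ in a neighborhood of $(0,0)$. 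After shrinking $N$ and picking $\epsilon>0$ suitably, the zeros of $Q_n(\,\cdot\,,t)$ lying in $N$, as $t$ ranges over $(-\epsilon,\epsilon)$, are precisely the $z\in N$ with $T(z)\in(-\epsilon,\epsilon)$; equivalently — since $|T|=O(|z|^2)$ is as small as we like on $N$ — the $z\in N$ with $T(z)\in\mathbb R$.

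Next I would compute the $2$-jet of $T$ by substituting $t=T(z)$ into $Q_n(z,T(z))\equiv 0$ and matching powers of $z$. Writing $Q_n(z,t)=a_0(t)+2a_1(t)z+a_2(t)z^2+\sum_{i\ge 3}a_i(t)z^i$ and $T(z)=\tau_1 z+\tau_2 z^2+\cdots$, and using $a_0(0)=a_1(0)=0$: the coefficient of $z^1$ in $Q_n(z,T(z))$ is $a_0'(0)\tau_1$, forcing $\tau_1=0$; then the coefficient of $z^2$ is $a_0'(0)\tau_2+a_2(0)$, forcing $\tau_2=-a_2(0)/a_0'(0)$, which is nonzero since $a_2(0)\neq 0$. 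Hence $T(z)=\alpha z^2+O(|z|^3)$ with $\alpha:=-a_2(0)/a_0'(0)\neq 0$.

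Now set $H(z):=\operatorname{Im}T(z)$, a real-analytic function on $N$; the locus of interest is $\mathcal Z=\{z\in N: H(z)=0\}$. From the previous step $H(0)=0$ and $dH(0)=0$, while the quadratic part of $H$ at $0$ is $H_2(z)=\operatorname{Im}(\alpha z^2)$. Writing $z=u+iv$, $H_2$ is the quadratic form with matrix $\begin{pmatrix}\operatorname{Im}\alpha & \operatorname{Re}\alpha\\ \operatorname{Re}\alpha & -\operatorname{Im}\alpha\end{pmatrix}$, which has trace $0$ and determinant $-|\alpha|^2<0$: it is nondegenerate and indefinite, and $\{H_2=0\}$ is the pair of lines through $0$ in the directions $\arg z=-\tfrac12\arg\alpha$ and $\arg z=-\tfrac12\arg\alpha+\tfrac\pi2$, which are orthogonal. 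By the real-analytic Morse lemma, an analytic change of coordinates near $0$ carries $H$ to $uv$, so $\mathcal Z$ is, near $0$, the union of two analytic curves $C_1,C_2$ crossing transversally. Finally, if $\hat e_i$ is a unit tangent vector to $C_i$ at $0$, parametrizing $C_i$ as $z=s\hat e_i+o(s)$ gives $0=H(z)=s^2 H_2(\hat e_i)+o(s^2)$, so $H_2(\hat e_i)=0$ and the tangent line of $C_i$ lies in $\{H_2=0\}$; since these tangent lines are distinct and $\{H_2=0\}$ consists of exactly two (orthogonal) lines, the curves $C_1,C_2$ are tangent to those lines and meet orthogonally at $0$. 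That is the assertion.

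I expect the only genuinely delicate step to be the passage from ``$H$ has nondegenerate indefinite Hessian at the critical point $0$'' to ``$\mathcal Z$ is a union of two smooth branches'', i.e. the invocation of the Morse lemma in the analytic category (equivalently: a single real blow-up $z=\rho e^{i\psi}$, after which $H/\rho^2$ is analytic and has a simple zero in $\psi$ at each relevant angle, so the implicit function theorem applies chart-by-chart). Everything else is routine bookkeeping. A variant that sidesteps the implicit function theorem for $T$ is to apply the Weierstrass preparation theorem to $Q_n$ with respect to $z$ — legitimate since $Q_n(z,0)$ vanishes to order exactly $2$ at $z=0$ — obtaining $Q_n(z,t)=U(z,t)\,\bigl(z^2+b(t)z+c(t)\bigr)$ with $U(0,0)=a_2(0)\neq 0$, $b(0)=c(0)=0$, $c'(0)=a_0'(0)/a_2(0)\neq 0$; the two branches are then $z_\pm(t)=\tfrac12\bigl(-b(t)\pm\sqrt{b(t)^2-4c(t)}\,\bigr)$ with $b(t)^2-4c(t)=-4c'(0)\,t+O(t^2)$, and the same square-root branching at $t=0$ that produces the two orthogonal tangent directions is again the crux.
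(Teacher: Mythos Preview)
Your argument is correct and takes a genuinely different route from the paper. The paper works directly with the roots $z_\pm(t)$ parametrized by $t$: it first treats the quadratic case $n=2$ via the quadratic formula, showing that $z_\pm(t)$ are tangent at $t=0$ to $\pm\sqrt{-a_0'(0)/a_2(0)}\,\sqrt{t}$, and then runs a finite induction in the degree from $k$ to $k+1$, using Rouch\'e's theorem on small circles about $z_{k,+}(t)$ to show that adjoining the monomial $a_{k+1}(t)z^{k+1}$ perturbs the root negligibly. Your route---solve for $t=T(z)$ by the implicit function theorem, compute the $2$-jet $T(z)=\alpha z^2+O(z^3)$ with $\alpha=-a_2(0)/a_0'(0)$, and read off the geometry of $\{\operatorname{Im}T=0\}$ from the indefinite trace-zero Hessian via the Morse lemma---is more structural and handles all $n$ at once; your closing Weierstrass-preparation variant is essentially a clean repackaging of the paper's ``reduce to a quadratic'' idea. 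What the paper's approach buys is that it is entirely elementary (only Rouch\'e) and hands you the explicit tangent directions $\pm\sqrt{-a_0'(0)/a_2(0)}\,\sqrt{t}$, which is exactly the datum used afterward to conclude that one arm of the cross enters the right half-plane; what your approach buys is that smoothness and orthogonality of the two branches come for free, with no tangency estimates, and the argument does not actually require $Q_n$ to be polynomial in $z$.
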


\noindent
In particular, it follows that near the origin, the solutions form a perpendicular cross
and thus at least one on the arms of the cross extends into the right half-plane.

\begin{proof}
We start with $n=2$. In this case, we can write out the solutions:
\begin{equation*}
z_\pm(t)=\frac{-a_1\pm \sqrt{-a_0a_2+a_1^2}}{a_2}=
\pm \sqrt{-{a_0}/{a_2}}\,\,\sqrt{1-{a_1^2}/\left({a_0a_2}\right)}\,\,\,-\frac{a_1}{a_2}\;.
\end{equation*}
Let us define a curve $\delta(t)$ to be tangent to a curve $\eta(t)$ at the origin for $t=0$
if $\delta(0)=\eta(0)=0$ and
\begin{equation}
\lim_{t\searrow 0} \dfrac{|\delta(t)-\eta(t)|}{|\eta(t)|}=0\;.
\label{eq:tangent}
\end{equation}
One checks that we need all the assumptions on the coefficients $a_i$, $i\in\{0,1,2\}$, to show that
$z_\pm(t)$ is tangent to $\pm \sqrt{-{a_0'(0)}/{a_2(0)}\,\,t}$. We proceed by doing $n-2$ induction steps. Given $Q_n$, we form all the intermediate polynomials
$\{Q_k\}_{k=2}^n$. Consider $t\in N_\epsilon=(-\epsilon,\epsilon)$ for $\epsilon$ small.
We wish to prove that $t\in N_\epsilon$, the solutions of $Q_k$ form two curves $z_{k,\pm}(t)$
tangent (in the sense of equation \ref{eq:tangent}) at the origin to $\pm \sqrt{-{a_0'(0)}/{a_2(0)}\,\,t}$ which we will from now one denote by $\pm \sqrt{ct}$. See Figure \ref{fig-induction-argument}.

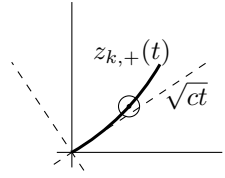
\begin{figure}[h]
\centering
\begin{tikzpicture}[scale = .4]
\draw (-.5,0)--(5,0);
\draw (0,-.5)--(0,5);
\draw[black, line width = 0.4mm] (0.,0) arc (-60:  -30:8cm);

\draw[dashed] (-.6,-.4)--(4.5,3);
\draw[dashed] (.4,-.6)--(-2,3);

\draw (1.9,1.53) circle [radius = .35];
\draw[fill = black] (1.9,1.53) circle [radius = .06];

\draw (2,3.3) node {$z_{k,+}(t)$};
\draw (3.8,1.9) node {$\sqrt{ct}$};
\end{tikzpicture}
\caption{\emph{The curve $\gamma_L$ around $z_{k,+}(t)$ (solid) which itself is on a curve tangent to $\sqrt{ct}$ (dashed).}}
\label{fig-induction-argument}
\end{figure}

We proved the statement holds for $n=2$. The induction hypothesis is that the above statement
holds for some fixed $k\in \{2,\cdots n-1\}$.
Fix an arbitrarily large $L$, (and at least as large as $n$)..
Then fix $\epsilon>0$ small enough, so that the conditions
in the following hold for all $t\in N_\epsilon$. Without loss of generality, take $t\geq 0$ and specialize to one branch, namely $z_{k,+}(t)$. $Q_k$ has no other zeros in an $2\sqrt{|c\epsilon|}$ neighborhood of the origin.
By continuity, for $|z|<\sqrt{|c\epsilon|}$, we can write $Q_k$ as
$(z-z_{k,+})(z-z_{k,-})\tilde Q_k(t,z)$, where $|\tilde Q_k(t,z)|\geq \frac 12|\tilde Q_k(0,0)|\ne 0$.
Similarly, we may assume that $|a_{k+1}(t)|\leq 2|a_{k+1}(0)|$.
Let $\gamma_L(s)$ be the curve $z_{k,+}(t)+\frac{|z_{k,+}(t)|}{L}\,e^{is}$. Then $\gamma_L$ contains no zeroes.
By the induction hypothesis, $z_{k,+}(t)$ is tangent to $\sqrt{ct}$ or:
\begin{equation*}
|z_{k,+}(t)-\sqrt{|ct|}|\leq \frac{1}{L} \sqrt{|ct|} \quad \Longleftrightarrow \quad
\left(1-L^{-1}\right)\sqrt{|ct|}\leq |z_{k,+}(t)| \leq \left(1+L^{-1}\right)\sqrt{|ct|}\;.
\end{equation*}
\begin{equation*}
\begin{array}{ccl}
|a_{k+1}(t)\gamma_L^{k+1}| &\leq & |a_{k+1}(t)|\; |z_{k,+}(t)|^{k+1}\;|1+L^{-1}|^{k+1}\\[0.2cm]
                    & \leq & 2|a_{k+1}(0)|\;|1+\frac{1}{k+1}|^{k+1}|ct|^\frac{k+1}{2}\, |1+\frac{1}{k+1}|^{k+1} = 2e^2 \,|a_{k+1}(0)|\;|ct|^\frac{k+1}{2}\;.\\[0.2cm]
                    \end{array}
                    \end{equation*}
                    \begin{equation*}
                    \begin{array}{ccl}
|Q_k(\gamma_L)|  & = & |\gamma_L-z_{k,+}|\;|\gamma_L-z_{k,-}|\;|\tilde Q_k(t,\gamma_L)| \quad \textrm{where} \quad
         \tilde Q_k(0,0)\neq 0\\[0.2cm]
 &= & \frac{|z_{k,+}(t)|}{L} \;|z_{k,+}(t)+\frac{|z_{k,+}(t)|}{L}\,e^{is}-z_{k,-}(t)|
\;|\tilde Q_k(t,z)|\\[0.2cm]
           &\geq & (L^{-1}-L^{-2})\sqrt{|ct|}\; \dfrac{\sqrt{|ct|}}{2} \;
            \dfrac{|\tilde Q_k(0,0)|}{2}\;.\end{array}
\end{equation*}
Thus we can choose $t$ small enough so that, on $\gamma_L$, $|a_{k+1}(t)z^{k+1}|$ is smaller
than $|Q_k(z)|$. Since neither
function has poles, Rouch\'{e}'s theorem \cite{marsden1999basic} implies that $a_{k+1}(t)z^{k+1}+Q_k(z)$
has the same number of zeros inside $\gamma_L$ as does $Q_k(z)$, namely one.
Thus $Q_{k+1}(z)$ has a unique zero within $\gamma_L$. Since we can do this for any value
of $L$ (at the price of making $\epsilon$ small enough), it follows that $z_{k+1,+}(t)$
is tangent to $z_{k,+}(t)$ and hence to $\sqrt{ct}$. Since we need only finitely
many induction steps to get to $z_{n,+}(t)$, the statement of the proposition follows.
\end{proof}


\singlespacing      
\bibliographystyle{plain}
\bibliography{references}

\begin{thebibliography}{10}

\bibitem{ashcroft1976solid}
N.W. Ashcroft and N.D. Mermin.
\newblock {\em Solid State Physics}.
\newblock HRW international editions. Holt, Rinehart and Winston, 1976.

\bibitem{baldivieso2019}
Pablo~E. Baldivieso.
\newblock Necessary conditions for stability in linear array oscillators.
\newblock {\em Ph.D. Dissertation, Portland State University}, 2019.

\bibitem{bamieh2012coherence}
B.~Bamieh, M.~R. Jovanovic, Partha Mitra, and Stacy Patterson.
\newblock Coherence in large-scale networks: Dimension-dependent limitations of
  local feedback.
\newblock {\em IEEE Transactions on Automatic Control}, 57(9):2235--2249, 2012.

\bibitem{Bamieh}
B.~{Bamieh}, F.~{Paganini}, and M.~A. {Dahleh}.
\newblock Distributed control of spatially invariant systems.
\newblock {\em IEEE Transactions on Automatic Control}, 47(7):1091--1107, July
  2002.

\bibitem{CantosTrans}
C.E. Cantos, D.K. Hammond, and J.J.P. Veerman.
\newblock Transients in the synchronization of asymmetrically coupled
  oscillator arrays.
\newblock {\em The European Physical Journal Special Topics},
  225(6):1199--1209, Sep 2016.

\bibitem{cantos2016signal}
C.E. Cantos, J.J.P. Veerman, and D.K. Hammond.
\newblock Signal velocities in oscillator networks.
\newblock {\em European Physical Journal Special Topics}, 225:1115--1126, 2016.

\bibitem{cook2005conditions}
P.A. Cook.
\newblock Conditions for string stability.
\newblock {\em Systems \& control letters}, 54(10):991--998, 2005.

\bibitem{Defoort}
M.~{Defoort}, T.~{Floquet}, A.~{Kokosy}, and W.~{Perruquetti}.
\newblock Sliding-mode formation control for cooperative autonomous mobile
  robots.
\newblock {\em IEEE Transactions on Industrial Electronics}, 55(11):3944--3953,
  Nov 2008.

\bibitem{hegselmann2002opinion}
Rainer Hegselmann, Ulrich Krause, et~al.
\newblock Opinion dynamics and bounded confidence models, analysis, and
  simulation.
\newblock {\em Journal of artificial societies and social simulation}, 5(3),
  2002.

\bibitem{herbrych2015dynamics}
J.~Herbrych, AG~Chazirakis, N~Christakis, and J.J.P. Veerman.
\newblock Dynamics of locally coupled oscillators with next-nearest-neighbor
  interaction.
\newblock {\em Differential Equations and Dynamical Systems}, pages 1--23,
  2015.

\bibitem{herman2016transients}
Ivo Herman, Dan Martinec, and J.J.P. Veerman.
\newblock Transients of platoons with asymmetric and different laplacians.
\newblock {\em Systems \& Control Letters}, 91:28--35, 2016.

\bibitem{kra2012circulant}
Irwin Kra and Santiago~R. Simanca.
\newblock On circulant matrices.
\newblock {\em Notices of the AMS}, 59(3):368--377, 2012.

\bibitem{LAFF}
G.~Lafferriere, A.~Williams, J.~Caughman, and J.J.P. Veerman.
\newblock Decentralized control of vehicle formations.
\newblock {\em Systems and Control Letters}, 54(9):899 -- 910, 2005.

\bibitem{Lin}
F.~{Lin}, M.~{Fardad}, and M.~R. {Jovanovic}.
\newblock Optimal control of vehicular formations with nearest neighbor
  interactions.
\newblock {\em IEEE Transactions on Automatic Control}, 57(9):2203--2218, Sep.
  2012.

\bibitem{marsden1999basic}
Jerrold~E Marsden, Michael~J Hoffman, Terry Marsden, et~al.
\newblock {\em Basic complex analysis}.
\newblock Macmillan, 1999.

\bibitem{OKUBO19861}
Akira Okubo.
\newblock Dynamical aspects of animal grouping: Swarms, schools, flocks, and
  herds.
\newblock {\em Advances in Biophysics}, 22:1 -- 94, 1986.

\bibitem{ploeg2014controller}
Jeroen Ploeg, Dipan~P Shukla, Nathan van~de Wouw, and Henk Nijmeijer.
\newblock Controller synthesis for string stability of vehicle platoons.
\newblock {\em IEEE Transactions on Intelligent Transportation Systems},
  15(2):854--865, 2014.

\bibitem{LyonsVeerman}
J.~J. P.~Veerman R.~Lyons.
\newblock In preparation.
\newblock {\em TBD}, 2019.

\bibitem{renatkins}
W~Ren and E.~Atkins.
\newblock Distributed multi-vehicle coordinated control via local information
  exchange.
\newblock {\em International Journal of Robust and Nonlinear Control},
  17(10-11):1002--1033, 2007.

\bibitem{ren2005survey}
Wei Ren, Randal~W Beard, and Ella~M Atkins.
\newblock A survey of consensus problems in multi-agent coordination.
\newblock In {\em Proceedings of the 2005, American Control Conference, 2005.},
  pages 1859--1864. IEEE, 2005.

\bibitem{swaroop1996string}
D.~Swaroop and J.~Karl Hedrick.
\newblock String stability of interconnected systems.
\newblock {\em IEEE transactions on automatic control}, 41(3):349--357, 1996.

\bibitem{TVS2012}
F.~M. Tangerman, J.J.P. Veerman, and B.~Stosic.
\newblock Asymmetric decentralized flocks.
\newblock {\em IEEE Transactions on Automatic Control}, 57(11):2844--2853,
  2012.

\bibitem{VST2009}
J.~J.~P. Veerman, B.~D. Stosic, and F.~M. Tangerman.
\newblock Automated traffic and the finite size resonance.
\newblock {\em Journal of Statistical Physics}, 137:189--203, 2009.

\bibitem{veerman2018spectra}
J.J.P. Veerman, David~K. Hammond, and Pablo~E. Baldivieso.
\newblock Spectra of certain large tridiagonal matrices.
\newblock {\em Linear Algebra and its Applications}, 548:123--147, 2018.

\bibitem{veerman2005flocks}
J.J.P. Veerman, Gerardo Lafferriere, John~S Caughman, and A.~Williams.
\newblock Flocks and formations.
\newblock {\em Journal of Statistical Physics}, 121(5-6):901--936, 2005.

\bibitem{VT2010}
J.J.P. Veerman and Tangerman~F. M.
\newblock Impulse stability of large flocks.
\newblock {\em Arxiv}, 1002.0782, 2010.

\bibitem{Young}
George~F. Young, Luca Scardovi, Andrea Cavagna, Irene Giardina, and Naomi~E.
  Leonard.
\newblock Starling flock networks manage uncertainty in consensus at low cost.
\newblock {\em PLOS Computational Biology}, 9(1):1--7, 01 2013.

\end{thebibliography}

\end{document}